\newcommand{\be}{\begin{equation}}
\newcommand{\e}{\end{equation}}
\newcommand{\bea}{\begin{eqnarray}}
\newcommand{\ea}{\end{eqnarray}}
\newtheorem{theorem}{\hspace{6.5mm}Theorem}[section]
\newtheorem{proposition}[theorem]{\hspace{6.5mm}Proposition}
\newtheorem{lemma}[theorem]{\hspace{6.5mm}Lemma}
\newtheorem{corollary}[theorem]{\hspace{6.5mm}Corollary}
\newtheorem{remark}[theorem]{\hspace{6.5mm}Remark}
\newtheorem{definition}[theorem]{\hspace{6.5mm}Definition}
\newtheorem{example}[theorem]{\hspace{6.5mm}Example}
\begin{document}

\title{\large Asymptotic Exponential Arbitrage\\ and Utility-based Asymptotic Arbitrage in\\ Markovian Models of Financial 
Markets\thanks{The authors thank the referee and the associate editor for extremely constructive and
helpful reports.}}

\author{Martin Le Doux Mbele Bidima\thanks{University of Yaound\'e I, Cameroon, e-mail: mbelebidima@gmail.com} \and Mikl\'os R\'asonyi
\thanks{MTA Alfr\'ed R\'enyi Institute of Mathematics, Budapest, Hungary and University of Edinburgh, U.K., e-mail: rasonyi@renyi.mta.hu
}}

\date{\today}

\maketitle

\begin{abstract}

Consider a discrete-time infinite horizon financial market model in which the logarithm of 
the stock price is a time discretization of a stochastic differential equation. Under conditions different from those given in \cite{MR}, we prove 
the existence of investment opportunities producing an exponentially growing profit with probability tending to $1$
geometrically fast. This is achieved using ergodic results on Markov chains and tools of large deviations theory.

Furthermore, we discuss asymptotic arbitrage in the expected utility sense and its
relationship to the first part of the paper.

\end{abstract}

\noindent{\bf Keywords:} Asymptotic exponential arbitrage, Markov chains, large deviations, expected utility.

\section{Introduction}

In the classical theory of financial markets,  absence of arbitrage (riskless profit)
is characterized by the existence of suitable ``pricing rules'': risk-neutral (i.e. equivalent martingale) measures for the discounted price process of the 
risky asset. This result is often referred to as ``the fundamental theorem of asset pricing''.
% In the majority of popular discrete or continuous-time financial models absence
%of arbitrage holds up to all finite time horizons $T>0$.

Further developments of arbitrage theory encompass the so-called ``large financial markets'' (see \cite{KK2}, \cite{FSr} and the references therein).
In these papers the following common feature of numerous models is highlighted: on each finite time horizon $T>0$, there is no arbitrage opportunity but when $T$ tends to infinity, one may realize riskless profit in the long
run. Such trading opportunities
are referred to as ``asymptotic arbitrage''.

An important tool that can be used for the study of asymptotic arbitrage is the theory of large deviations (see  \cite{DZ}), as proposed in
\cite{FSr}. More recently, in \cite{MR} we presented  the discrete-time versions of some results in \cite{FSr} about asymptotic arbitrage and, in this framework, we extended
them by studying ``asymptotic exponential arbitrage with geometrically decaying probability of failure'', i.e. we discussed the possibility for investors to realize an exponentially growing profit on their long-term investments while controlling (at a geometrically decaying rate) the probability of failing to achieve such a profit. 
Some of these results were subsequently proved for continuous-time models in \cite{KN}. In the present paper
we prove results similar to Theorem 5 of \cite{MR} using different arguments and technical tools (the large deviation results of \cite{KM2} instead of those in \cite{KM1}). In this way we manage
to cover some well-known models for asset prices which were untractable in the
setting of \cite{MR}, see Examples \ref{ex2.16}, \ref{exnew} below.
We recall now the setting of \cite{MR}.

Consider a financial market in which two assets are traded: a riskless asset (a bank account or a risk-free bond) with interest rate set to $0$, i.e. with price normalized to $B_t:=1$ at all times $t\in\mathbb{N}$; and a single risky asset (such as stock) whose (discounted) price is assumed to evolve as
\be\label{eq1}
S_t:=\exp(X_t),\quad t\in\mathbb{N},
\e
where the logarithm of the stock price, $X_t$, is an $\mathbb{R}$-valued stochastic process governed by the discrete time difference equation 
\be\label{eq2}
X_t-X_{t-1}=\mu(X_{t-1})+\sigma(X_{t-1})\varepsilon_t,\quad t\geq 1,
\e
starting from a constant $X_0\in\mathbb{R}$. Here $\mu,\,\sigma:\mathbb{R}\to\mathbb{R}$ are measurable functions
(determining the drift and volatility of the stock) and $(\varepsilon_t)_{t\in\mathbb{N}}$ is an $\mathbb{R}$-valued sequence of i.i.d. random variables representing the random driving process of the stock price evolution.

% The $\log$-Markovian environment in the market derives from a mathematical fact, useful in the sequel of the present paper: the $\log$-prices
Note that the log-price process $X_t$ is clearly a (discrete-time) Markov chain in the (uncountable) state space $\mathbb{R}$ (see pp. 211--228 in \cite{BW}). We suppose that its evolution is modelled on a filtered probability space $(\Omega,\mathcal{F},\mathbb{F},\mathbb{P})$, where $\mathbb{F}:=(\mathcal{F}_t)_{t\in\mathbb{N}}$ and $\mathcal{F}_t:=\sigma(X_s,0\le s\le t)$, is the natural filtration of the log-price process $X_t$ of the stock. In the sequel $\mathbb{E}$ denotes expectation with respect to the probability $\mathbb{P}$.

Trading strategies in this market are assumed $\mathbb{F}$-predictable $[0,1]$-valued processes $(\pi_t)_{t\geq 1}$ (i.e. $\pi_t$ is assumed $\mathcal{F}_{t-1}$-measurable) and no short-selling or borrowing are allowed. This means that, at each time $t$, investors allocate a proportion $\pi_t\in [0, 1]$ of their overall wealth to the 
stock while the rest remains in the bank account. Hence, given any such strategy, the corresponding wealth process $V_t^{\pi}$ of an investor  obeys the dynamics
\be\label{eq3}
 \frac{V_t^{\pi}}{V_{t-1}^{\pi}}=(1-\pi_t)+\pi_t\frac{S_t}{S_{t-1}},\quad \mbox{for all }t\ge 1,
\e
where $V_0^{\pi}:=V_0>0$ is the investor's initial capital.

\begin{definition}\label{d1}\emph{(Definitions 3 and 4 of \cite{MR})} Let $\pi_t$ be a trading strategy.

 $i)$ We say that $\pi_t$ is an asymptotic exponential arbitrage ($AEA$) in the wealth model \eqref{eq3} if there is a constant $b>0$ such that, for all $\epsilon>0$, there is a time $t_{\epsilon}\in\mathbb{N}$ satisfying
\be\label{ineq1}
\mathbb{P}(V_t^{\pi}\geq e^{bt})\geq 1-\epsilon,\mbox{ for all time}\,\,t\geq t_{\epsilon}.
\e

 $ii)$ We say that $\pi_t$ generates an asymptotic exponential arbitrage ($AEA$) with geometrically decaying probability  of failure $(GDPF)$ if there are constants $b>0$, and $c>0$ such that,
\begin{equation}\label{ineq2}
\mathbb{P}(V_t^{\pi}\geq e^{bt})\geq 1- e^{-ct}\mbox{ for all large enough }t\geq 1.
\end{equation}
\end{definition}

Clearly, $AEA$ with $GDPF$ implies $AEA$. The second kind of asymptotic arbitrage above is much  more stringent than the first one. Indeed, in \eqref{ineq1} above, there is no visible relationship between the tolerance level $\epsilon$ and the elapsed time $t_{\epsilon}$ from which the investor starts realizing exponentially growing profit; one may need to wait for a
very long time to achieve a desired tolerance level. The concept of $AEA$ with $GDPF$  removes this drawback  by allowing investors to control, at a geometrically decaying rate, the probability of failing to achieve an exponentially growing profit in the long term.

We recall the main results of \cite{MR} here. Under the following main assumptions: boundedness of the drift and volatility functions $\mu$ and $\sigma$; $\sigma$ being
bounded away from $0$ on compacts and exponential integrability of the $\varepsilon_t$s, we proved Theorem 2 (resp. Theorem 4) of \cite{MR} on the existence of $AEA$ (resp. $AEA$ with $GDPF$) in the wealth model \eqref{eq3}. 
%In addition to
%large deviation techniques, the proofs of these theorems depended on the availability of ``market price of risk estimates'' that we managed to construct in the discrete-time setting of that paper. We mention that we only needed large deviation upper estimates. 
Theorem 5 of \cite{MR} provided ergodicity-related conditions on $X_t$
which ensured AEA with GDPF.

%We do not give details about these estimates here as they are not relevant to the present article.
%But we do mention that we did not use the classical Large Deviations Theory (the Large Deviations Principle, Cram\'er's Theorem, G\"artner-Ellis' Theorem, etc) as in \cite{DZ}, \cite{GOW}, to prove those two results. We only applied existence of the ``market price of risk'' and ``large deviations estimates'', together with some additional tricks that we omit to detail here.

 In sections 2 and 3 below we continue to consider the same models as in \eqref{eq1}, \eqref{eq2}, \eqref{eq3}. Under a new set of conditions on $\mu$, $\sigma$ and $(\varepsilon_t)_{t\in\mathbb{N}}$ (see $(A_1),(A_2),(A_3),(A_4)$ below), which are neither stronger nor weaker than the corresponding conditions in \cite{MR} recalled above, we show again the existence of $AEA$ with $GDPF$ (see Theorem \ref{t2.15} below), using  classical large deviations techniques from \cite{DZ}, Markov chains tools from \cite{MT} and ergodicity results on Markov chains from \cite{KM2}. Moreover, the trading strategies generating those arbitrage
opportunities will be explicitly constructed; a contribution we already obtained in \cite{MR} under different conditions, but it was absent from the inspiring continuous-time work \cite{FSr}. To get those explicit arbitrage opportunities we will be considering only stationary Markovian strategies, that is; strategies $(\pi_t)_{t\geq 1}$ where $\pi_t=\pi(X_{t-1})$, $t\ge 1$, for some fixed measurable function $\pi:\mathbb{R}\to [0,1]$.

In section 4, we will discuss the concept of ``utility-based'' asymptotic arbitrage, that is, 
asymptotic arbitrage linked to von Neumann-Morgenstern expected utilities (see Chapter 2 of \cite{FSd}). An optimal investment for 
an economic agent with utility function $U$ and time horizon $T$ is $\pi_t$ with final portfolio value $V_T^{\pi}$ for 
which the expected utility $\mathbb{E}U(V_T^{\pi})$ is maximal. We do not focus on the construction of 
optimal strategies but rather on ones that provide (rapidly) increasing expected utilities for the agent as the time horizon tends to infinity.
%Some classes of $HARA$ utilities will be considered. 
More precisely,
we wish to treat questions like: for power utilities $U$, and given an $AEA$ strategy $\pi_t$ as in \eqref{ineq1}, 
will the investor's expected utility $\mathbb{E}U(V_t^{\pi})$ tend to the highest available utility $U(\infty)$? If so, how fast such a convergence will take place? Conversely, if an agent pursues a trading strategy $\pi_t$  such that her/his expected utility has a convergence rate estimate, will $\pi_t$ generate $AEA$ (with $GDPF$)? We provide partial answers to these questions in Proposition \ref{p3.1}, Theorem \ref{p3.3} and Theorem \ref{t3.5} below.

\section{Main theorem on AEA with GDPF}

We denote by $\lambda$ the Lebesgue measure on $\mathcal{B}(\mathbb{R})$. 
We assume throughout this paper that the Markov chain $X_t$ satisfies the following conditions:
%we set the following conditions on the models \eqref{eq1}, \eqref{eq2} and \eqref{eq3}:

 $(A_1)$ The random variables $\varepsilon_t$s have a (common) density $\gamma$ with respect to $\lambda$, and this density is bounded and bounded
away from $0$ on each compact in $\mathbb{R}$.

 $(A_2)$ The drift $\mu$ is locally bounded. The volatility $\sigma$ is positive, bounded away from zero on each compact and it is (globally) bounded.

  $(A_3)$  We impose the mean-reverting drift condition 
\be\label{a1}
\limsup_{|x|\to\infty}\frac{|x+\mu(x)|}{|x|}<1. \nonumber
\e

 $(A_4)$ We assume the following integrability property for the law of the
$\varepsilon_t$s:
\be\label{a2}
 \,\exists\, \kappa>0\mbox{ such that }\mathbb{E}\big(e^{\kappa\varepsilon_1^2}\big)=:I<\infty,
\e
%where the distribution of $\varepsilon$ is the same as that of the $\varepsilon_t,t\in\mathbb{N}$. We also 
and $\mathbb{E}\varepsilon_1=0$ holds\footnote{This is not a restriction
of generality.
If we had $\mathbb{E}\varepsilon_1=m$, we could replace $\mu(x)$ by $\mu'(x):=\mu(x)+\sigma(x)m$
and $\varepsilon_t$ by $\varepsilon_t':=\varepsilon_t-m$ and in this way
get back to the case $\mathbb{E}\varepsilon_1=0$.}.

\begin{remark} {\rm These conditions are similar to those of \cite{MR}. The main difference
is that $\mu$ was assumed to be a bounded function in \cite{MR} while it may
be unbounded in the present paper. In this way we accomodate e.g. autoregressive processes (see Examples \ref{ex2.16} and \ref{exnew} below), which did not fit the setting
of \cite{MR}. While we relax boundedness of $\mu$, we need the integrability condition $(A_4)$ on $\varepsilon_t$, which is more stringent than the ones in \cite{MR}. Furthermore, $(A_3)$
is a much stronger ergodicity condition on the Markov chain $X_t$ than that
of Theorem 5 in \cite{MR}. Hence our main result (Theorem \ref{t2.15}) does not generalize \cite{MR} but rather complements it.}
\end{remark}

\begin{remark}
{\rm Analogously to \cite{FSr}, where the exponential of an Ornstein-Uhlenbeck process
was considered, our conditions imply that the log-price $X_t$ is ergodic
(in a strong sense). It may be argued on ecomonetric grounds that the price increments $X_t/X_{t-1}$ rather than $X_t$ should be assumed ergodic.
Just like in \cite{MR}, we opted for the present setting in order to be consistent
with \cite{FSr}. Very similar arguments could be used to prove analogous
results for the case where $X_t/X_{t-1}$ is assumed to be an ergodic Markov chain. We do not pursue this
route here.}
\end{remark}

 %Now, before proceeding to the main result of this section, recall below the logprice process $X_t$ of 
%the stock, governed by equation \eqref{eq2}:
%$$X_{t+1}-X_t=\mu(X_t)+\sigma(X_t)\varepsilon_{t+1}=\sigma(X_t)\big(\rho(X_t)+\varepsilon_{t+1}\big),$$
%where the function $\rho$ is defined by $\rho(x):=\mu(x)/\sigma(x)$, for all $x\in\mathbb{R}$.

%\begin{definition}\label{d2.13} We call the function $\rho$ the ``log-market price of risk'' for the stock $S_t$.

 %\emph{The quantity $\rho$ bears a straightforward interpretation: since $\mu(X_t)$ represents the average one-step return of of the stock (on the logarithmic scale) while $\sigma(X_t)$ measures the one-step volatility of this $\log$-price as driven by the random ``noise'' $\varepsilon_t$, $\rho(X_t)$ represents the one-step return of (the logarithm) of the stock price per unit volatility.}
%\end{definition}

% Next, let $m:=\mathbb{E}(\varepsilon)$, the common expectation of the random noise $(\varepsilon_t)$.

Consider the following condition:
\be\label{rc+}
\mbox{$(RC_+)$ \,\,\,\,The set}\,\,R^+:=\{x\in\mathbb{R}\mid \mu(x)> 0\}\,\,\mbox{ satisfies }\lambda(R^+)>0.
\e

 We interpret $R^+$ as representing all states of the stock $\log$-prices $X_t$ whose 
``drift'' is positive. Thus $(RC_+)$ means that the set of states $x$ from which there is a ``bright future''
(i.e. there is an upward trend for the stock price) has positive Lebesgue measure. This is rather natural: note that
short-selling is prohibited in our model hence negative market trends
cannot be taken advantage of. We now state the main result of the present article.

%This is for example the case wherever $\rho$ is
%lower semicontinuous and $R_m^+$ is nonempty.

\begin{theorem}\label{t2.15} Assume that $(A_1)-(A_4)$ and $(RC_+)$ hold. Then the Markovian strategy
$\pi_t^+:=\mathbf{1}_{R^+}(X_{t-1})$ produces an $AEA$ with $GDPF$.
\end{theorem}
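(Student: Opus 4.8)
The plan is to exploit that $\pi_t^+$ is $\{0,1\}$-valued: this turns the log-wealth into an additive functional of the chain plus a martingale, each of which can then be controlled by large deviations.

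First I would make the log-wealth explicit. Since $\pi_s^+ = \mathbf{1}_{R^+}(X_{s-1}) \in \{0,1\}$, \eqref{eq3} gives $\log(V_s^{\pi^+}/V_{s-1}^{\pi^+}) = \mathbf{1}_{R^+}(X_{s-1})(X_s - X_{s-1})$, whence, using \eqref{eq2},
\be
\log V_t^{\pi^+} = \log V_0 + \textstyle\sum_{s=1}^t \mu^+(X_{s-1}) + M_t, \quad M_t := \textstyle\sum_{s=1}^t \mathbf{1}_{R^+}(X_{s-1})\sigma(X_{s-1})\varepsilon_s, \nonumber
\e
where $\mu^+ := \mathbf{1}_{R^+}\mu = \max(\mu,0) \ge 0$ and $M_t$ is an $\mathbb{F}$-martingale, since $\varepsilon_s$ is independent of $\mathcal{F}_{s-1}$ and $\mathbb{E}\varepsilon_1 = 0$ by $(A_4)$. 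Establishing \eqref{ineq2} thus reduces to a lower bound on $\frac1t\sum_{s=1}^t \mu^+(X_{s-1}) + \frac1t M_t$.

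Next I would invoke ergodicity. By $(A_1)$ and $(A_2)$ the one-step kernel has a strictly positive density $\frac{1}{\sigma(x)}\gamma\big(\frac{y-x-\mu(x)}{\sigma(x)}\big)$, so $X_t$ is $\lambda$-irreducible and aperiodic; together with the mean reversion $(A_3)$ — which yields a geometric Foster--Lyapunov condition for $V(x)=e^{\delta|x|}$ (the needed exponential moments of $\varepsilon_1$ coming from $(A_4)$) in the sense of \cite{MT} — this gives a unique invariant law $\nu$ and geometric ergodicity. As $\lambda$ is then an irreducibility measure and $\nu$ is maximal, $\lambda(R^+)>0$ forces $\nu(R^+)>0$, so $2a := \int \mu^+\,d\nu > 0$; I then fix $b := a$. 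The core of the argument is two exponential estimates. For the drift part, choosing $N$ large enough that the truncation $\mu_N^+ := \min(\mu^+, N)$ still satisfies $\int \mu_N^+\,d\nu > \tfrac32 a$ (possible by monotone convergence), I would bound
\be
\mathbb{P}\big(\tfrac1t\textstyle\sum_{s=1}^t \mu^+(X_{s-1}) < \tfrac32 a\big) \le \mathbb{P}\big(\tfrac1t\textstyle\sum_{s=1}^t \mu_N^+(X_{s-1}) < \tfrac32 a\big) \nonumber
\e
and apply the large-deviation results of \cite{KM2} to the \emph{bounded} functional $\mu_N^+$: since $\tfrac32 a$ lies strictly below the ergodic mean $\int \mu_N^+\,d\nu$, the rate function is strictly positive there and the right-hand side is $\le e^{-c_1 t}$ for some $c_1>0$ and all large $t$. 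For the martingale, boundedness of $\sigma$ (from $(A_2)$) and the sub-Gaussianity of $\varepsilon_1$ implied by $(A_4)$ give uniformly bounded conditional exponential moments of the increments, so a standard supermartingale/optional-stopping argument yields a Hoeffding-type bound $\mathbb{P}(M_t < -\tfrac12 a t) \le e^{-c_2 t}$, $c_2>0$.

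Combining, off the two bad events one has $\frac1t\log V_t^{\pi^+} \ge \frac1t\log V_0 + \tfrac32 a - \tfrac12 a \ge a = b$ for $t$ large, so $\mathbb{P}(V_t^{\pi^+} \ge e^{bt}) \ge 1 - e^{-c_1 t} - e^{-c_2 t} \ge 1 - e^{-ct}$ for a suitable $c>0$, which is exactly \eqref{ineq2}. The main obstacle I anticipate is the rigorous verification that the hypotheses of the large-deviation theorem in \cite{KM2} hold: one must derive the precise multiplicative/Lyapunov-drift condition from $(A_3)$ and $(A_4)$, confirm the regularity (irreducibility, aperiodicity, geometric ergodicity) of $X_t$, and check that the lower-tail rate function for $\mu_N^+$ is genuinely \emph{strictly} positive below its mean rather than merely nonnegative. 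A secondary point requiring care is that these deviation and concentration bounds hold uniformly starting from the fixed deterministic value $X_0$, and not merely under the stationary initial law.
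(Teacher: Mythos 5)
Your argument is essentially correct, but it follows a genuinely different route from the paper. The paper does not decompose the log-wealth into drift plus martingale; instead it lifts to the bivariate chain $\Phi_t=(X_{t-1},X_t)$, writes $\log(V_t^{\pi}/V_0)=\sum_{n=1}^t f(\Phi_n)$ with the single unbounded functional $f(x,y)=\log(1-\pi(x)+\pi(x)e^{y-x})$, verifies the multiplicative drift condition $(DV3+)$ for $\Phi_t$ with $V=W=1+qx^2$ (Propositions \ref{p2.5}--\ref{p2.9}), checks $f\in L_{\infty}^{W_0}$ for the linearly growing weight $W_0$, and then applies the Kontoyiannis--Meyn theory for unbounded functionals together with G\"artner--Ellis (Proposition \ref{p2.11}, Corollary \ref{kp}) and the positivity of the ergodic mean (Lemma \ref{l2.14}, which computes exactly your $\int_{R^+}\mu\,d\eta>0$). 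Your exploitation of the $\{0,1\}$-structure of $\pi^+$ buys two real simplifications: the truncation $\mu_N^+$ reduces the large-deviation input to \emph{bounded} functionals of the univariate chain $X_t$, and the noise term is disposed of by an elementary sub-Gaussian martingale bound rather than by the $L_{\infty}^{W_0}$ machinery. The price is that your argument is tied to indicator strategies, whereas the paper's Proposition \ref{p2.11} covers arbitrary Markovian $[0,1]$-valued strategies and is reused verbatim for the utility results of Section 4 (Theorem \ref{p3.3} needs the full analytic $\Lambda_f$ for the unbounded $f$).

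Two points in your sketch still require the work you partly anticipate. First, for the strict positivity of the lower-tail rate of $\frac1t\sum_s\mu_N^+(X_{s-1})$ below its mean, plain geometric ergodicity via the Foster--Lyapunov function $e^{\delta|x|}$ is not obviously sufficient to invoke \cite{KM1} or \cite{KM2}; one still needs a $(DV3)$-type multiplicative drift condition for $X_t$, which is precisely the content of the paper's Proposition \ref{p2.5} (using $(A_4)$ and Lemma \ref{l2.4} with $V=W=1+qx^2$), so this verification is not actually avoided, only transferred from $\mathbb{R}^2$ to $\mathbb{R}$. You must also dispose of the degenerate case $\sigma^2(\mu_N^+)=0$ (the analogue of the paper's ``$f$ is $\nu$-a.s.\ constant'' case), since Corollary \ref{kp} needs positive asymptotic variance; here the everywhere-positive transition density forces $\mu_N^+$ to be a.e.\ constant in that case, which makes the bound trivial. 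Second, your Hoeffding bound needs the \emph{centered} estimate $\mathbb{E}e^{\theta\varepsilon_1}\le e^{C\theta^2}$ for all $\theta$, including small $\theta$; Lemma \ref{l2.4} only gives $\mathbb{E}e^{a|\varepsilon_1|}\le e^{ca^2}$ for $a\ge 1$, so a short additional argument using $\mathbb{E}\varepsilon_1=0$ is required. Neither point is a fatal obstacle.
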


The proof will be presented at the end of the next section, after appropriate
preparations.

\section{Large deviation estimates}

Consider the $\mathbb{R}^2$-valued auxiliary process $\Phi_t:=(X_{t-1},X_t)$, $t\ge 0$, consisting of two consecutive values of the log-price process $X_t$, where $X_{-1}$ is an arbitrarily chosen constant. We 
present below a set of preliminary results.
%, mainly based on the Markov chain theory of \cite{MT}.
% and \cite{BW}.

\begin{proposition}\label{p2.1} The process $\Phi_t$ is a Markov chain with state space $\mathbb{R}^2$.
\end{proposition}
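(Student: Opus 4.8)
The plan is to exploit the fact, already noted in the excerpt, that $X_t$ is itself a Markov chain, and to show that $\Phi_t$ inherits the Markov property because its one-step transition is driven entirely by the current second coordinate together with the fresh, independent noise $\varepsilon_{t+1}$. First I would observe that the natural filtration of $\Phi$, say $\mathcal{G}_t:=\sigma(\Phi_0,\dots,\Phi_t)$, coincides with $\sigma(X_{-1},X_0,\dots,X_t)$ and hence, since $X_{-1}$ is a constant, with $\mathcal{F}_t$. Thus proving a Markov statement for $\Phi$ relative to its own filtration is the same as proving it relative to $\mathbb{F}$.

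Next I would write out the one-step transition explicitly. Since $\Phi_{t+1}=(X_t,X_{t+1})$ and, by \eqref{eq2}, $X_{t+1}=X_t+\mu(X_t)+\sigma(X_t)\varepsilon_{t+1}$, for any bounded measurable $f:\mathbb{R}^2\to\mathbb{R}$ we have $f(\Phi_{t+1})=g(X_t,\varepsilon_{t+1})$, where $g(x,e):=f\big(x,\,x+\mu(x)+\sigma(x)e\big)$ is jointly measurable by the measurability of $\mu,\sigma$ assumed in $(A_2)$. The structural point to emphasize is that $X_t$ is $\mathcal{G}_t$-measurable while $\varepsilon_{t+1}$ is independent of $\mathcal{G}_t$.

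The core step is then the standard independence (``freezing'') lemma: because $X_t$ is $\mathcal{G}_t$-measurable and $\varepsilon_{t+1}$ is independent of $\mathcal{G}_t$,
$$\mathbb{E}\big[f(\Phi_{t+1})\mid\mathcal{G}_t\big]=h(X_t),\qquad h(x):=\int_{\mathbb{R}}g(x,e)\,\gamma(e)\,de,$$
using the density $\gamma$ of $\varepsilon_1$ from $(A_1)$. Since $h(X_t)$ is a function of $X_t$ alone, and $X_t$ is the second coordinate of $\Phi_t$, it is $\sigma(\Phi_t)$-measurable; conditioning on $\Phi_t$ and applying the tower property gives $\mathbb{E}[f(\Phi_{t+1})\mid\mathcal{G}_t]=h(X_t)=\mathbb{E}[f(\Phi_{t+1})\mid\Phi_t]$, which is exactly the Markov property. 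This computation also exhibits the transition kernel $Q\big((u,v),\cdot\big)$ as the law of $\big(v,\,v+\mu(v)+\sigma(v)\varepsilon_1\big)$, which depends only on $v$ and not on $u$.

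I do not expect a genuine obstacle here: the statement is essentially a bookkeeping consequence of the i.i.d. structure of $(\varepsilon_t)$ and the one-step recursion \eqref{eq2}. The only points requiring a little care are the joint measurability of $g$ and the measurability of $x\mapsto h(x)$ (which follow from measurability of $\mu,\sigma$ and Tonelli), and the verification that conditioning on $\Phi_t$ is no weaker than conditioning on $X_t$ for the purpose of the transition — true precisely because the kernel ignores the first coordinate of $\Phi_t$.
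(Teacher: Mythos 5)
Your proof is correct and rests on the same structural observation as the paper's: $\Phi_{t+1}$ is a measurable function of $\Phi_t$ (in fact only of its second coordinate) and the fresh independent noise $\varepsilon_{t+1}$, i.e. $\Phi_{t+1}=F(\Phi_t,\xi_{t+1})$ with $(\xi_t)_t$ i.i.d. The only difference is one of packaging: the paper invokes the general ``random recursion driven by i.i.d.\ noise is Markov'' result cited from \cite{BW}, whereas you prove that fact directly via the freezing lemma and the tower property, which is a perfectly valid (and self-contained) substitute.
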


 {\bf Proof.} We derive this from \cite{BW} pp. 211-228, where the Markov property of any (discrete-time) 
 process $Y_t$ in a Polish state space $S$ is proved when  $Y_{t+1}=m(Y_t, \xi_{t+1})$ with 
 $(\xi_t)_t$ a sequence of i.i.d. random variables independent of $Y_0$ and valued in some measurable space $S'$ and 
 $m:S\times S'\to S$ a measurable function. Clearly, $X_{t+1}=m(X_t,\varepsilon_{t+1})$ for $t\in\mathbb{N}$ 
 with $m(x,y):=x+\mu(x)+\sigma(x)y$, $x,y\in\mathbb{R}$. It follows that we have 
 $\Phi_{t+1} = (X_t,X_{t+1})=(X_t,m(X_t,\varepsilon_{t+1}))=F(\Phi_t,\xi_{t+1})$, where 
 $\xi_t:=(0,\varepsilon_t)$ and $F$ is the measurable function defined on $S\times S':=\mathbb{R}^2\times\mathbb{R}^2$ by 
 $F((x,y),(a,b)):=(y,m(y,b))$. Since the $\varepsilon_t$s are i.i.d. and independent of $X_0$, 
 the $\xi_t$s are also i.i.d. and independent from $\Phi_0$, showing the result. \hfill $_{\blacksquare}$
%that the next state $\Phi_{t+1}$ of the process is generated from the previous state $\Phi_t$, plus an independent noise $\xi_{t+1}$, as required 

Notice that
$$
P(x,A):=P(X_1\in A|X_0=x)=\int_{A} p(x,y)dy,\ x\in\mathbb{R},\ A\in\mathcal{B}(\mathbb{R}),
$$
where
$$
p(x,y):=\frac{1}{\sigma(x)}\gamma\left(\frac{y-\mu(x)-x}{\sigma(x)}\right),
$$
and this function is bounded away from $0$ on each compact in $\mathbb{R}^2$,
by $(A_1)$ and $(A_2)$. 

For $z\in\mathbb{R}^2$ and $A\in\mathcal{B}(\mathbb{R}^2)$,
let $Q^t(z,A):=P(\Phi_t\in A|\Phi_0=z)$ be the $t$-step transition kernel of
the chain $\Phi_t$.

We note that, for $t\geq 2$ and $A\in\mathcal{B}(\mathbb{R}^2)$,
\begin{equation}\label{mkkk}
Q^t((u,v),A)=\int_{\mathbb{R}^{t}} 1_A(a_{t-1},a_{t})p(v,a_1)p(a_1,a_2)\ldots p(a_{t-1},a_t)da_1\ldots 
da_{t},\ u,v\in\mathbb{R}.
\end{equation}

% Let $\lambda_2$ denote the Lebesgue measure on $\mathbb{R}^2$ and $\mathcal{B}(\mathbb{R})$, $\mathcal{B}(\mathbb{R}^2)$ the respective Borel $\sigma$-algebras on $\mathbb{R}$ and $\mathbb{R}^2$. For $x\in\mathbb{R}$ and $A\in\mathcal{B}(\mathbb{R})$, let $P(x,A):=\mathbb{P}(X_{t+1}\in A|X_t=x)$, $t\ge 0$ be the one-step transition probability kernel of the chain $X_t$, and $P^t(x,A):=\mathbb{P}(X_t\in A|X_0=x)$ its $t$-step transition probability kernel. Also, for $z\in\mathbb{R}^2$ and $C\in\mathcal{B}(\mathbb{R}^2)$, let $Q(z,C)$ and $Q^t(z,C)$ denote the corresponding kernels of the chain $\Phi_t$. 

Let $\lambda_2$ denote the Lebesgue measure on $\mathcal{B}(\mathbb{R}^2)$.

\begin{proposition}\label{p2.2} The Markov chain $\Phi_t$ is $\psi$-irreducible, i.e. there is a non-trivial measure $\psi$ such that $\psi(A)>0$ implies that for all $z$, $Q^t(z,A)>0$ for some $t$.
\end{proposition}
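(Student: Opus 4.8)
The plan is to exhibit an explicit irreducibility measure $\psi$ and verify the defining property directly using the transition-kernel formula \eqref{mkkk}. The natural candidate is $\psi := \lambda_2$, the Lebesgue measure on $\mathbb{R}^2$; I would first argue that it suffices to show $Q^t(z,A)>0$ for \emph{some} $t$ whenever $\lambda_2(A)>0$. The key structural input is the positivity of the one-step density $p(x,y)$, which is recorded just above the statement: by $(A_1)$ and $(A_2)$ the density $\gamma$ is bounded away from zero on compacts and $\sigma$ is positive and bounded away from zero on compacts, so $p(x,y)>0$ for all $(x,y)\in\mathbb{R}^2$, and is in fact bounded below on every compact rectangle.

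First I would fix $z=(u,v)$ and a target set $A$ with $\lambda_2(A)>0$, and observe that by inner regularity of Lebesgue measure $A$ contains a bounded set $A'$ of positive measure, which we may take inside a compact rectangle $[-N,N]^2$. The plan is to take $t=2$ in \eqref{mkkk}, so that
\be
Q^2((u,v),A)=\int_{\mathbb{R}^2} 1_A(a_1,a_2)\,p(v,a_1)\,p(a_1,a_2)\,da_1\,da_2 \ge \int_{A'} p(v,a_1)\,p(a_1,a_2)\,da_1\,da_2. \nonumber
\e
On the compact set $[-N,N]^2$ containing $A'$, both factors $p(v,a_1)$ and $p(a_1,a_2)$ are bounded below by a strictly positive constant (here I use that $v$ is fixed so that $p(v,\cdot)$ stays bounded below on the compact $a_1$-range, again invoking $(A_1)$--$(A_2)$). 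Hence the integrand is bounded below by a positive constant times $1_{A'}$, giving $Q^2((u,v),A)\ge \delta\,\lambda_2(A')>0$.

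Since this lower bound holds for every starting point $z$ with the \emph{same} fixed time $t=2$, the chain $\Phi_t$ is $\psi$-irreducible with $\psi=\lambda_2$ (indeed uniformly so), and the proof concludes. I expect the only genuine subtlety to be the careful invocation of positivity of $p$: one must ensure that the two consecutive density factors remain jointly bounded away from zero on the relevant compact region, which reduces to the fact that for $(a_1,a_2)$ ranging over a compact set the arguments $\frac{a_1-\mu(v)-v}{\sigma(v)}$ and $\frac{a_2-\mu(a_1)-a_1}{\sigma(a_1)}$ of $\gamma$ also range over a compact set---this uses local boundedness of $\mu$ and the positivity/lower bound of $\sigma$ on compacts from $(A_2)$. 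The rest is routine measure theory, and no appeal to $(A_3)$ or $(A_4)$ is needed at this stage.
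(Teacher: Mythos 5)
Your proposal is correct and follows essentially the same route as the paper: take $\psi=\lambda_2$, use the two-step kernel formula \eqref{mkkk}, and conclude from the strict positivity of $p(v,a_1)p(a_1,a_2)$. The paper simply notes that the integral of an everywhere-positive function over a set of positive Lebesgue measure is positive, so your extra step of truncating $A$ to a compact subset and extracting a uniform lower bound, while harmless, is not needed.
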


 {\bf Proof.} It suffices to check that $\lambda_2$ is such a measure. If $\lambda_2(A)>0$ then for $t=2$ we
get from \eqref{mkkk} that 
$$
Q^2((u,v),A)=\int_{\mathbb{R}^{2}} 1_A(a_1,a_2)p(v,a_1)p(a_1,a_2)da_1\,da_2>0
$$
since $p(v,a_1)p(a_1,a_2)$ is (everywhere) positive.
\hfill $_{\blacksquare}$

We recall two definitions from Chapter 5 of \cite{MT} in our specific setting. 
A set $C_2\subset \mathbb{R}^2$ is called \emph{small} if
$$
Q^t(x,A)\ge\mu(A)\,\,\mbox{for all}\,\, x\in C_2,\,A\in\mathcal{B}(\mathbb{R}^2)
$$
with some non-trivial measure $\mu$. The chain $\Phi_t$ is \emph{aperiodic} if, 
for some small set $C_2$ and corresponding measure $\mu$, the greatest common
divisor of the set
\[
E_{C_2}:=\{n\ge 1: \mbox{for all }x\in C_2,\ Q^n(x,A)\ge\delta_n\mu(A)\mbox{ for some $\delta_n>0$}\},
\]
is $1$.

\begin{proposition}\label{p2.3}
If $C$ is a compact interval in $\mathbb{R}$ then $C_2:=\mathbb{R}\times C$ is a 
small set for the Markov chain $\Phi_t$, and this chain is aperiodic.
\end{proposition}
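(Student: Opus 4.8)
The plan is to read off both assertions directly from the explicit representation \eqref{mkkk} of the iterated kernel, combined with the observation (made just after Proposition \ref{p2.1}) that $p$ is bounded away from $0$ on every compact subset of $\mathbb{R}^2$. The only genuine difficulty is that $p$ is \emph{not} uniformly positive on all of $\mathbb{R}^2$, so a naive integration over $\mathbb{R}$ in \eqref{mkkk} would not yield a lower bound that is uniform in the starting point. I would circumvent this by truncating every intermediate integration variable to a fixed compact interval, on which each factor $p(\cdot,\cdot)$ has a strictly positive infimum.

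For the smallness of $C_2=\mathbb{R}\times C$ I take $t=2$. Fix arbitrary compact intervals $D,E\subset\mathbb{R}$ of positive length and put $\beta_1:=\inf_{C\times D}p>0$ and $\beta_2:=\inf_{D\times E}p>0$, both strictly positive by compactness of $C\times D$, $D\times E$ and positivity of $p$ on compacts. For any $(u,v)\in C_2$ (so that $v\in C$) and any $A\in\mathcal{B}(\mathbb{R}^2)$, restricting the domain of integration in \eqref{mkkk} to $a_1\in D$, $a_2\in E$ gives
\[
Q^2((u,v),A)\ \ge\ \int_{D\times E}1_A(a_1,a_2)\,p(v,a_1)\,p(a_1,a_2)\,da_1\,da_2\ \ge\ \beta_1\beta_2\,\lambda_2\!\big(A\cap(D\times E)\big)=:\mu(A).
\]
The measure $\mu$ is non-trivial (it charges $D\times E$) and the bound is uniform over $(u,v)\in C_2$, since only the constraint $v\in C$ was used. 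Hence $C_2$ is small with $t=2$ and witnessing measure $\mu$; in particular $2\in E_{C_2}$ with $\delta_2=1$.

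To obtain aperiodicity I would show, with the \emph{same} $C_2$ and $\mu$, that $3\in E_{C_2}$ as well, so that $\gcd\{2,3\}=1$. Here I use \eqref{mkkk} with $t=3$, whose indicator is $1_A(a_2,a_3)$ and whose weight is $p(v,a_1)\,p(a_1,a_2)\,p(a_2,a_3)$. Fixing one further compact interval $D'$ and restricting $a_1\in D'$, $a_2\in D$, $a_3\in E$, each of the three factors is bounded below by the positive infimum of $p$ over the corresponding compact rectangle $C\times D'$, $D'\times D$, $D\times E$; carrying out the (now harmless) $a_1$-integration over $D'$ produces a constant $\delta_3>0$ with
\[
Q^3((u,v),A)\ \ge\ \delta_3\,\lambda_2\!\big(A\cap(D\times E)\big)\ =\ \frac{\delta_3}{\beta_1\beta_2}\,\mu(A)
\]
for all $(u,v)\in C_2$. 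Thus $\{2,3\}\subseteq E_{C_2}$ and the chain is aperiodic. The whole argument is routine once the truncation device is in place; I expect the only point requiring care to be the bookkeeping of which compact rectangle controls which factor $p$, and checking that $\mu$ remains non-trivial and independent of the starting point.
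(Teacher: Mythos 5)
Your proof is correct and follows essentially the same route as the paper: both arguments read the lower bounds for $Q^2$ and $Q^3$ off the explicit kernel representation \eqref{mkkk}, truncate the intermediate variables to compacts where $p$ has a positive infimum, and conclude aperiodicity from $\gcd\{2,3\}=1$. The only difference is cosmetic --- the paper takes all truncation intervals equal to $C$ itself (so the minorizing measure is a multiple of $\lambda_2(\cdot\cap(C\times C))$), whereas you allow arbitrary compact intervals $D,E,D'$, which incidentally also covers the degenerate case where $C$ is a single point.
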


{\bf Proof.} It suffices to show that for all $u\in\mathbb{R}$ and $v\in C$,
$$
Q^i((u,v),A)\geq c_i\lambda_2(A\cap (C\times C))
$$
for $i=2,3$ and appropriate constants $c_2,c_3>0$ since this implies $2,3\in E_{C_2}$.
This is true by \eqref{mkkk} with
$$
c_2=\inf_{v, a_1\in C}p(v,a_1)\inf_{a_1,a_2\in C}p(a_1,a_2),\quad  
c_3=\inf_{a_2,a_3\in C}p(a_2,a_3) \inf_{v,a_1\in C}p(v,a_1)
\inf_{a_1,a_2\in C}p(a_1,a_2)\lambda(C).
$$
\hfill $_{\blacksquare}$

Now we need certain moment estimates.
\begin{lemma}\label{l2.4} The random variable $\varepsilon_1$ in \eqref{a2} of Assumption $(A_4)$ satisfies the following property: there is $c>0$ such that for every real number $a\geq 1$ we have
\begin{equation}\label{csillagok}
\mathbb{E}\big(e^{a|\varepsilon|}\big)\le e^{ca^2}.
\end{equation}
\end{lemma}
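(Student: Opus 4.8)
The plan is to bound $\mathbb{E}(e^{a|\varepsilon|})$ by relating the linear exponent $a|\varepsilon|$ to the quadratic exponent $\kappa\varepsilon^2$ appearing in $(A_4)$, and then absorb the deficit into a term of the form $e^{ca^2}$. The elementary inequality driving everything is the pointwise estimate
\begin{equation}\label{keyineq}
a|\varepsilon|\le \kappa\varepsilon^2+\frac{a^2}{4\kappa},
\end{equation}
which follows from $\left(\sqrt{\kappa}\,|\varepsilon|-\tfrac{a}{2\sqrt{\kappa}}\right)^2\ge 0$ (equivalently, from Young's inequality $xy\le \tfrac{x^2}{2}+\tfrac{y^2}{2}$ applied to $x=\sqrt{2\kappa}\,|\varepsilon|$ and $y=a/\sqrt{2\kappa}$). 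This splits the exponent into a piece controlled by the fixed integrability constant $\kappa$ and a piece depending only on $a$.

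Exponentiating \eqref{keyineq} and taking expectations gives
\begin{equation}\label{mainbound}
\mathbb{E}\big(e^{a|\varepsilon|}\big)\le e^{a^2/(4\kappa)}\,\mathbb{E}\big(e^{\kappa\varepsilon^2}\big)=I\,e^{a^2/(4\kappa)},
\end{equation}
where $I<\infty$ by $(A_4)$. It remains to choose a single constant $c>0$ for which $I\,e^{a^2/(4\kappa)}\le e^{ca^2}$ holds for all $a\ge 1$. Taking logarithms, this is equivalent to $\log I+\tfrac{a^2}{4\kappa}\le ca^2$, i.e. $\tfrac{\log I}{a^2}+\tfrac{1}{4\kappa}\le c$. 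Since $a\ge 1$ forces $\tfrac{\log I}{a^2}\le \log I$ (and this quantity is $\le 0$ if $I\le 1$, in which case it only helps), the left-hand side is at most $\log I+\tfrac{1}{4\kappa}$ uniformly in $a\ge 1$. Hence one may simply set $c:=\max\{\log I,0\}+\tfrac{1}{4\kappa}$, and \eqref{csillagok} holds for every $a\ge 1$.

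There is essentially no serious obstacle here; the statement is a soft consequence of the Gaussian-type integrability hypothesis $(A_4)$, and the only mild subtlety is making sure the additive constant $\log I$ is genuinely absorbed uniformly over the unbounded range $a\ge 1$ rather than merely for large $a$. The restriction $a\ge 1$ is precisely what makes the bound $\tfrac{\log I}{a^2}\le \log I$ available, so the constant $c$ can be taken independent of $a$; without a lower bound on $a$ the additive term could not be folded into a purely quadratic expression. I would present \eqref{keyineq}, then \eqref{mainbound}, and finish with the explicit choice of $c$.
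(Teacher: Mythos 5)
Your proof is correct, and it takes a genuinely different (and cleaner) route than the paper's. You rely on the pointwise inequality $a|\varepsilon|\le\kappa\varepsilon^2+a^2/(4\kappa)$ (completing the square, i.e.\ Young's inequality), exponentiate, invoke $(A_4)$ directly to get $\mathbb{E}\big(e^{a|\varepsilon|}\big)\le I\,e^{a^2/(4\kappa)}$, and then absorb $\log I$ using $a\ge 1$. The paper instead argues via the tail distribution: it writes $\mathbb{E}\big(e^{a|\varepsilon_1|}\big)=\int_0^\infty\mathbb{P}\big(e^{a|\varepsilon_1|}>x\big)\,dx$, bounds the tail by Markov's inequality applied to $e^{\kappa\varepsilon_1^2}$ to obtain $\mathbb{P}\big(e^{a|\varepsilon_1|}>x\big)\le I\,x^{-(\kappa/a^2)\log x}$, and splits the integral at $x=e^{2a^2/\kappa}$, beyond which the integrand decays at least like $x^{-2}$; this yields $\mathbb{E}\big(e^{a|\varepsilon_1|}\big)\le e^{2a^2/\kappa}+\mathrm{const}$ and hence the claim with $c=c_1+2/\kappa$. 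Your argument is shorter, avoids the tail-integration bookkeeping, and gives the sharper leading constant $1/(4\kappa)$ in place of $2/\kappa$; both proofs exploit the restriction $a\ge 1$ in exactly the same way, namely to fold the additive constant into the purely quadratic bound $e^{ca^2}$ uniformly in $a$.
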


{\bf Proof.} Set $\xi:=|\varepsilon_1|$. Then we have
 \be
 \begin{array}{lll}
  \mathbb{P}\big(e^{a\xi}>x\big) &=& \mathbb{P}\Big(\exp\Big(\kappa\Big[\frac{\log(e^{a\xi})}{a}\Big]^2\Big)>\exp\Big(\kappa\Big[\frac{\log x}{a}\Big]^2\Big)\Big)\\ \nonumber

                           &\le& I\exp\Big(-\kappa\big(\log(x)/a\big)^2\Big)\,\,\mbox{by Markov's inequality} \\ \nonumber
                           &=& I(\frac{1}{x})^{(\kappa/a^2)\log x},
\end{array}
\e
see \eqref{a2} for the definition of $I$.
 Since the exponent $(\kappa/a^2)\log x > 2$ provided that $x>e^{2a^2/\kappa}$, we have $\mathbb{E}\big(e^{a\xi}\big)=\int_0^{\infty}\mathbb{P}\big(e^{a\xi}>x\big)dx\le e^{2a^2/\kappa}+I\int_{\exp(2a^2/\kappa)}^{\infty}1/x^2dx$. The last integral is less than $\int_1^{\infty}1/x^2dx$, which is finite, thus we conclude the proof of
\eqref{csillagok} by taking $c=c_1+(2/\kappa)$ with $c_1>0$ large enough. \hfill $_{\blacksquare}$

The proof of Theorem \ref{t2.15} will be based  on results from \cite{KM2}. In order to apply
the results of that paper we will need to verify that the Markov chain $\Phi_t$ satisfies condition $(DV3+)$
below. We formulate this condition only in the case where the state space is $\mathbb{R}^d$.

%Indeed, we will show below that $X_t$ satisfies conditions $(DV3+)$ $(i)$ and $(ii)$
%of \cite{KM2} (see the proofs of Propositions \ref{p2.5} and \ref{p2.8} for the
%precise formulation of these conditions).

We say that a $\psi$-irreducible and aperiodic Markov chain $Z_t$ with transition law 
$R=R(x,A)$ satisfies condition $(DV3+)$ if
\begin{enumerate}

\item[(i)] There are measurable functions $V,W:\mathbb{R}^d\to [1,\infty)$ and a small
set $C$ such that for all $x\in\mathbb{R}^d$,
\begin{equation}\label{cry}
\log (e^{-V}Re^V)(x)\leq -\delta W(x) + b\mathbf{1}_C(x)
\end{equation}
for some $\delta,b>0$.

\item[(ii)] There exists $t_0>0$ such that, for each $r<\Vert W\Vert_{\infty}$, there is
a measure $\beta_r$ with $\beta_r(e^V)<\infty$ and 
\begin{equation}\label{ad}
\mathbb{P}_x(Z_{t_0}\in A\mbox{ and }Z_t\mbox{ has not quitted }C_W(r)\mbox{ before }t_0+1)\leq
\beta_r(A)
\end{equation}
for all $x\in C_W(r)$ and $A\in\mathcal{B}(\mathbb{R}^d)$,
where $C_W(r)=\{y\in\mathbb{R}^d:\ W(y)\leq r\}$.
\end{enumerate}

We now recall the results of \cite{KM2} which we will need in the sequel. Let $W_0:\mathbb{R}^d\to [1,\infty)$
such that
\be\label{eq18}
\lim_{r\to\infty}\sup_{x\in\mathbb{R}^d}\Big(\frac{W_0(x)}{W(x)}{\bf 1}_{\{W(x)>r\}}\Big)=0.
\e
Next, consider the Banach space $L_{\infty}^{W_0}:=\{g:\mathbb{R}^d\to\mathbb{C}:\sup_{x}\frac{|g(x)|}{W_0(x)}<\infty\}$, 
equipped with the norm $\|g\|_{W_0}:=\sup_{x}|g(x)|/W_0(x)$.

\begin{theorem}\label{alap} Let $Z_t$ satisfy $(DV3+)$ with unbounded $W$. Then
$Z_t$ admits an invariant probability measure $\nu$, the limit
$$
\Lambda(g):=\lim_{t\to\infty} \frac{1}{t}\ln \mathbb{E}_z [\exp(\sum_{n=1}^t g(Z_n))]
$$
exists and it is finite for all $g\in L_{\infty}^{W_0}$ and for all initial values $Z_0=z$ (and it is independent of $z$). Fix $g_0\in L_{\infty}^{W_0}$.
The function
$\theta\to \Lambda(g_0+\theta g)$ is analytic in $\theta$ with Taylor-expansion
$$
\Lambda(g_0+\theta g)=\Lambda(g_0)+\theta \nu(g)+\frac{1}{2}\theta^2 \sigma^2(g)+O(\theta^3),
$$
where $\sigma^2(g):=\lim_{t\to\infty}(1/t)\mathrm{var}(g(Z_0)+\ldots+g(Z_{t-1}))$.
\end{theorem}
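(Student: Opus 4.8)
The plan is to follow the spectral-theoretic route of \cite{KM2}: one identifies $\Lambda(g)$ as the logarithm of the leading eigenvalue of the family of ``twisted'' transition kernels, and reads off every asserted property (existence, finiteness, independence of $z$, analyticity and the Taylor expansion) from a spectral-gap property of these kernels on the weighted space $L_\infty^{W_0}$. For $g\in L_\infty^{W_0}$ I introduce the twisted kernel $\hat P_g$ acting by $(\hat P_g f)(x)=\int_{\mathbb{R}^d} e^{g(y)}f(y)\,R(x,dy)$, so that by the Markov property $\mathbb{E}_z[\exp(\sum_{n=1}^t g(Z_n))]=(\hat P_g^{\,t}\mathbf{1})(z)$. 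Thus $\Lambda(g)$ is exactly the exponential growth rate of $\hat P_g^{\,t}\mathbf{1}$, i.e. (the logarithm of) the spectral radius of $\hat P_g$.

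The existence of the invariant probability measure $\nu$ is the easy part. Reading \eqref{cry} with $f=e^{V}$ gives $Rf\le e^{-\delta W+b\mathbf{1}_C}f$, and since $W\ge 1$ this is a geometric Foster--Lyapunov drift inequality off the small set $C$. Together with the minorization supplied by the small set, standard theory from \cite{MT} yields positive Harris recurrence, hence a unique invariant $\nu$ (with $\nu(e^{V})<\infty$).

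The heart of the matter, and the step I expect to be the main obstacle, is to show that $(DV3+)$ forces each $\hat P_g$ to be a bounded operator on $L_\infty^{W_0}$ possessing a simple, isolated, maximal eigenvalue $e^{\Lambda(g)}$ with a strictly positive eigenfunction $h_g$ comparable to $W_0$, separated by a genuine spectral gap from the rest of the spectrum (i.e. $\hat P_g$ is $W_0$-uniformly quasi-compact). This is precisely the notion of \emph{multiplicative regularity} of \cite{KM2}: the multiplicative drift \eqref{cry} controls the exponential growth of the twisting in terms of $V$, while condition (ii), through the uniform bound \eqref{ad} with $\beta_r(e^V)<\infty$ on the sublevel sets $C_W(r)$, provides the uniform-in-$x$ minorization needed to close the spectral argument. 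Granting the gap, the multiplicative mean ergodic theorem gives $\hat P_g^{\,t}\mathbf{1}\sim e^{t\Lambda(g)}\,h_g\,\check\nu_g(\mathbf{1})$ with constants uniform in the starting point (because $h_g\asymp W_0$); taking $(1/t)\log$ yields the limit $\Lambda(g)$, its finiteness, and its independence of $z$.

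Finally I would obtain the analyticity and the Taylor expansion by analytic perturbation theory. The map $\theta\mapsto \hat P_{g_0+\theta g}$ is an analytic $L_\infty^{W_0}$-operator-valued function, since $e^{\theta g(y)}$ is entire in $\theta$ and the $W_0$-weight absorbs the resulting growth; as $e^{\Lambda(g_0)}$ is an isolated simple eigenvalue, Kato's theory makes the perturbed eigenvalue $e^{\Lambda(g_0+\theta g)}$ and its eigenprojection analytic near $\theta=0$, whence $\theta\mapsto\Lambda(g_0+\theta g)$ is analytic. Differentiating the eigenvalue equation $\hat P_{g_0+\theta g}h_\theta=e^{\Lambda(g_0+\theta g)}h_\theta$ at $\theta=0$ and pairing against the left eigenmeasure (the normalised $g_0$-twisted invariant measure, which reduces to $\nu$ when $g_0=0$) produces the first-order coefficient; the second-order coefficient is computed by solving the Poisson equation for the centred function $g-\nu(g)$ and is recognised, exactly as in the martingale proof of the central limit theorem, as the asymptotic variance $\sigma^2(g)=\lim_{t\to\infty}(1/t)\,\mathrm{var}(g(Z_0)+\cdots+g(Z_{t-1}))$. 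All of this is carried out in \cite{KM2}, which we simply invoke.
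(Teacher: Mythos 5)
Your proposal is correct and, at bottom, takes the same route as the paper: the paper's entire proof is the citation of Theorems 1.2 and 4.3 of \cite{KM2}, and your outline is an accurate sketch of the machinery behind those theorems (twisted kernels, spectral gap under $(DV3+)$, multiplicative mean ergodic theorem, analytic perturbation of the isolated eigenvalue), ending with the same invocation. The only quibble is that in \cite{KM2} the spectral gap and the eigenfunction $h_g$ live in a weighted space with weight comparable to $e^V$ rather than $W_0$ (the space $L_\infty^{W_0}$ is only the class of admissible functionals $g$), but this does not affect the argument.
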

\begin{proof}
This follows from Theorems 1.2 and 4.3 of \cite{KM2}.
\end{proof}

Let us define $\Lambda_g(\theta):=\Lambda (\theta g)$ for $\theta\in\mathbb{R}$. Denote
$$
\Lambda_g^*(x):=\sup_{\theta\in\mathbb{R}}(\theta x-\Lambda_g(\theta)),\quad x\in\mathbb{R},
$$
the Fenchel-Legendre conjugate of $\Lambda_g(\cdot)$.
\begin{corollary}\label{kp}
Under the conditions of the previous Theorem, if $\sigma^2(g)>0$ then
$\Lambda_g^*(x)>0$ for all $x\neq \nu(g)$.
\end{corollary}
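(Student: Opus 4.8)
The plan is to derive positivity of the rate function $\Lambda_g^*$ from strict convexity of the cumulant generating function $\Lambda_g$ together with the general properties of Fenchel--Legendre transforms. First I would record what Theorem \ref{alap} gives us about $\Lambda_g$: applied with $g_0 = 0$, the theorem says $\theta \mapsto \Lambda(\theta g) = \Lambda_g(\theta)$ is analytic on $\mathbb{R}$ with Taylor expansion $\Lambda_g(\theta) = \theta\,\nu(g) + \tfrac{1}{2}\theta^2\sigma^2(g) + O(\theta^3)$ near $\theta = 0$. In particular $\Lambda_g(0) = 0$, $\Lambda_g'(0) = \nu(g)$, and $\Lambda_g''(0) = \sigma^2(g) > 0$ by hypothesis. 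Since $\Lambda_g$ is a logarithmic moment generating function (a limit of normalized log-Laplace transforms), it is convex in $\theta$; this is a standard fact that follows from H\"older's inequality applied to each finite-$t$ expectation and passes to the limit.

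The core of the argument is then the classical duality relationship. For any $x$, by definition $\Lambda_g^*(x) = \sup_\theta(\theta x - \Lambda_g(\theta)) \ge 0\cdot x - \Lambda_g(0) = 0$, so the rate function is always nonnegative; the real task is to rule out the value $0$ when $x \neq \nu(g)$. I would argue by the standard characterization that $\Lambda_g^*(x) = 0$ forces $x$ to be the derivative $\Lambda_g'(0) = \nu(g)$. Concretely, suppose $x \neq \nu(g)$; I want to exhibit a single $\theta$ for which $\theta x - \Lambda_g(\theta) > 0$. Using the Taylor expansion, for small $\theta$ we have
\[
\theta x - \Lambda_g(\theta) = \theta(x - \nu(g)) - \tfrac{1}{2}\theta^2\sigma^2(g) + O(\theta^3).
\]
Choosing $\theta$ small with the same sign as $x - \nu(g)$, the linear term $\theta(x-\nu(g))$ is strictly positive and dominates the quadratic and higher-order corrections, so the expression is strictly positive for $\theta$ close enough to $0$. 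Taking the supremum over $\theta$ then yields $\Lambda_g^*(x) > 0$, which is exactly the claim.

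The step I expect to require the most care is making the ``dominates'' argument rigorous, i.e. controlling the error term uniformly. The Taylor expansion in Theorem \ref{alap} is only guaranteed in a neighborhood of $\theta = 0$, so I must fix an interval $(-\theta_0, \theta_0)$ on which $|\Lambda_g(\theta) - \theta\nu(g)| \le K\theta^2$ for some constant $K$ (available from analyticity, since $\Lambda_g''$ is bounded near $0$). Then for $x \neq \nu(g)$ I select $\theta = s\,\mathrm{sign}(x - \nu(g))$ with $s>0$ small, giving $\theta x - \Lambda_g(\theta) \ge s|x - \nu(g)| - Ks^2$, and for $0 < s < |x-\nu(g)|/K$ (and $s < \theta_0$) this is strictly positive. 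This requires nothing beyond the analyticity and the value $\sigma^2(g) > 0$ only enters to guarantee the curvature is genuine rather than degenerate; in fact the positivity of $\Lambda_g^*(x)$ away from $\nu(g)$ already follows from strict convexity alone, with $\sigma^2(g) > 0$ ensuring $\Lambda_g$ is not affine near the origin. An alternative, perhaps cleaner, route is to invoke the standard convex-analysis fact that for a convex, differentiable, finite function $\Lambda_g$, the conjugate satisfies $\Lambda_g^*(x) = 0$ if and only if $x \in \partial\Lambda_g(0)$, i.e. $x = \Lambda_g'(0) = \nu(g)$; this immediately gives the contrapositive statement. I would present the explicit perturbation argument since it is self-contained and avoids appealing to subdifferential machinery.
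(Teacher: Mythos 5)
Your proof is correct, but it takes a genuinely different route from the paper's. The paper first shows $\Lambda_g^*(\nu(g))=0$ (the supremum defining the conjugate being attained at $\theta=0$ because $\Lambda_g'(0)=\nu(g)$ and $\theta\mapsto\theta\nu(g)-\Lambda_g(\theta)$ is concave) together with $\Lambda_g^*\ge 0$ everywhere, so that $\nu(g)$ is a global minimiser; it then invokes the convex-duality fact that differentiability of $\Lambda_g$ forces $\Lambda_g^*$ to be strictly convex on its effective domain, whence the minimiser is unique and $\Lambda_g^*>0$ off $\nu(g)$. You instead exhibit, for each $x\neq\nu(g)$, an explicit $\theta=s\,\mathrm{sign}(x-\nu(g))$ with $\theta x-\Lambda_g(\theta)\ge s|x-\nu(g)|-Ks^2>0$ for $s$ small, using only the Taylor bound $|\Lambda_g(\theta)-\theta\nu(g)|\le K\theta^2$ near the origin; this is a complete and rigorous argument. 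Your route is more elementary and self-contained: it sidesteps the smoothness/strict-convexity duality (which, as invoked in the paper, requires some care about effective domains), and, as you correctly observe, it reveals that the hypothesis $\sigma^2(g)>0$ is not actually needed for the stated conclusion --- differentiability of $\Lambda_g$ at $0$ with $\Lambda_g(0)=0$ and $\Lambda_g'(0)=\nu(g)$ already suffices. What the paper's argument buys in exchange is the extra structural information that $\nu(g)$ is the \emph{unique} global minimiser of a \emph{strictly convex} rate function; both of these facts are used downstream in the proof of Theorem \ref{t2.15} to conclude that $\Lambda_f^*$ is decreasing on $(-\infty,\nu(f)]$ and hence that the infimum in the upper large-deviations bound equals $\Lambda_f^*(\nu(f)/2)$. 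If you adopted your version of the corollary, that later step would need a separate (though easy) justification.
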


{\bf Proof.} $\Lambda_g$ is analytic, a fortiori, it is differentiable. $\Lambda_g(0)=0$ by the definition of $\Lambda$. 
From the Taylor expansion of the preceding Theorem, 
$\Lambda_g'(0)=\nu(g)$ and $\Lambda_g''(0)=\sigma^2(g)>0$ so we get that $\Lambda_g^*(\nu(g))=\nu(g)\times 0-\Lambda_g(0)=0$. By the definition of a conjugate function we always have 
$\Lambda_g^*(x)\ge 0\times x-\Lambda_g(0)=0$ for all $x\in\mathbb{R}$. It follows that $\nu(g)$ is a global minimiser for $\Lambda_g^*$. 
By the differentiability of $\Lambda_g$, $\Lambda_g^*$ is strictly convex on its effective domain. This implies that the global minimiser 
$\nu(g)$ for $\Lambda_g^*$ is unique. This uniqueness implies that $\Lambda_g^*(x)>0$ for all $x\ne\nu(g)$. 
\hfill $_{\blacksquare}$

In order to apply these results to our long-term investment problems we need to establish that $\Phi_t$ satisfies $(DV3+)$.
First we prove a related statement about $X_t$.

\begin{proposition}\label{p2.5} The Markov chain $X_t$ satisfies the ``drift condition'' \eqref{cry} for $d=1$,
$R(x,A)=P(x,A)$ with a suitable compact interval $C\subset \mathbb{R}$ and $V(x)=W(x)=1+qx^2$ with a suitable $q>0$.
\end{proposition}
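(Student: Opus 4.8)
The plan is to verify the drift inequality \eqref{cry} by direct computation with the quadratic test function $V(x)=W(x)=1+qx^2$, exploiting the mean-reverting condition $(A_3)$ to produce the negative drift outside a compact set and the integrability condition $(A_4)$ (through Lemma \ref{l2.4}) to control the exponential moments that arise. First I would write out the object to be estimated. By definition, $(e^{-V}Pe^V)(x)=e^{-V(x)}\mathbb{E}[e^{V(X_1)}\mid X_0=x]$, and since $X_1=x+\mu(x)+\sigma(x)\varepsilon_1$ under $\mathbb{P}_x$, I have
\[
(e^{-V}Pe^V)(x)=\mathbb{E}\big[\exp\big(q(x+\mu(x)+\sigma(x)\varepsilon_1)^2-qx^2\big)\big].
\]
Taking logarithms, the condition \eqref{cry} demands an upper bound of the form $-\delta(1+qx^2)+b\mathbf 1_C(x)$, so the task reduces to showing that the exponent inside the expectation is, on average, dominated by a negative multiple of $x^2$ once $|x|$ is large.

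Next I would expand the quadratic. Writing $y:=x+\mu(x)$, so that $y^2-x^2=(x+\mu(x))^2-x^2$, the exponent becomes $q(y+\sigma(x)\varepsilon_1)^2-qx^2=q(y^2-x^2)+2qy\sigma(x)\varepsilon_1+q\sigma(x)^2\varepsilon_1^2$. The crucial term is $q(y^2-x^2)$: by $(A_3)$ there are $\rho<1$ and $R_0$ with $|x+\mu(x)|\le\rho|x|$ for $|x|>R_0$, whence $y^2\le\rho^2x^2$ and $q(y^2-x^2)\le -q(1-\rho^2)x^2$ for large $|x|$. This is the source of the negative drift. The remaining two terms are a linear-in-$\varepsilon_1$ term (with coefficient of order $|x|$, since $|y|\le\rho|x|$ and $\sigma$ is bounded by $(A_2)$) and a term of order $\varepsilon_1^2$ (since $\sigma$ is globally bounded). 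I would bound the expectation of the exponential of these. Because $\sigma\le\Vert\sigma\Vert_\infty$, the $\varepsilon_1^2$ term contributes a bounded factor provided $q$ is chosen small enough that $q\Vert\sigma\Vert_\infty^2<\kappa$, using $(A_4)$. For the linear term, after possibly separating it via Cauchy--Schwarz or by completing the square, I would invoke Lemma \ref{l2.4}: the exponential moment $\mathbb{E}[e^{a|\varepsilon_1|}]\le e^{ca^2}$ with $a$ proportional to $q|x|$ yields a contribution of order $\exp(Cq^2x^2)$ to the logarithm, i.e. of order $q^2x^2$.

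Assembling these estimates, the logarithm of $(e^{-V}Pe^V)(x)$ is at most $-q(1-\rho^2)x^2+O(qx^2\cdot q)+O(1)$, and the main point is that the genuinely negative term $-q(1-\rho^2)x^2$ is linear in $q$ while the error from the linear-noise term is of order $q^2x^2$; hence for $q$ small enough the net coefficient of $x^2$ is strictly negative, say $-\delta q$ for some $\delta>0$, which absorbs into $-\delta W(x)=-\delta(1+qx^2)$ after adjusting constants. Outside a sufficiently large compact interval $C=[-R,R]$ (with $R\ge R_0$) this gives \eqref{cry} with the stated $V=W$; inside $C$ the exponent is bounded (by local boundedness of $\mu$ and boundedness of $\sigma$, together with the exponential moment of $\varepsilon_1$), so the excess is swallowed by a finite constant $b$ on $\mathbf 1_C$. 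I expect the main obstacle to be the bookkeeping of the cross term $2qy\sigma(x)\varepsilon_1$: one must be careful to keep its exponential-moment contribution genuinely of order $q^2x^2$ (so that it cannot overwhelm the order-$q$ negative drift) while simultaneously choosing $q$ small enough to respect the constraint $q\Vert\sigma\Vert_\infty^2<\kappa$ imposed by the Gaussian-type tail in $(A_4)$. Once the correct dependence on $q$ is isolated, the rest is a routine choice of constants.
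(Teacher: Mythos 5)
Your proposal is correct and follows essentially the same route as the paper: the same quadratic expansion of the exponent, the mean-reversion $(A_3)$ supplying a negative term of order $qx^2$, Cauchy--Schwarz to split the cross term from the $\varepsilon_1^2$ term, Lemma \ref{l2.4} turning the linear-in-$\varepsilon_1$ term into an error of order $q^2x^2$, and the choice of $q$ small so that the order-$q$ drift dominates the order-$q^2$ error (with the constraint $qM^2<\kappa/2$ rather than $<\kappa$ after the Cauchy--Schwarz doubling, a bookkeeping point you already flag). The treatment of the compact set via a finite constant $b$ also matches the paper's Claim 2.
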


 {\bf Proof.} Recall that $R\,e^V(x):=\int e^{V(y)}R(x,dy)$, for all $x\in\mathbb{R}$.
We have to show
\be\label{ineq10}
 P\,e^V(x)\le e^{V(x)-\delta W(x) + b{\bf 1}_C(x)}\,\,\mbox{for all}\,\, x\in\mathbb{R}
\e
for suitably small $q>0$ and $C=[-K,K]$ with $K$ suitably large.
 %Define $V(x)=W(x):=1+qx^2,\,x\in\mathbb{R}$, where $q>0$ is a small number to be chosen. Consider a compact set $C:=[-K,K]$ for a large positive constant $K$. As in the proof of Proposition \ref{p2.3}, $C$ satisfies \eqref{ineq9}.

 Since $Pe^V(x)=\mathbb{E}\big(e^{V(X_1)}\mid X_0=x\big)=\mathbb{E}\big(e^{V(x+\mu(x)+\sigma(x)\varepsilon_1)}\big)$, it follows from \eqref{ineq10} that we need to show,
\be\label{ineq11}
 \mathbb{E}\big(e^{1+q(x+\mu(x))^2+2q(x+\mu(x))\sigma(x)\varepsilon_1+q\sigma^2(x)\varepsilon_1^2}\big)\le e^{(1-\delta)V(x)+b{\bf 1}_C(x)}\,\,\mbox{for all}\,\,x\in\mathbb{R}.
\e

 To get this, it is sufficient to prove the two claims below:

 {\bf Claim 1:} For all $x$ with $|x|>K$ with $K$ large enough we have
\be\label{e8}
 \mathbb{E}\big(e^{1+q(x+\mu(x))^2+2q(x+\mu(x))\sigma(x)\varepsilon_1+q\sigma^2(x)\varepsilon_1^2}\big)\le e^{(1-\delta)(1+qx^2)}
\e

 {\bf Claim 2:} For ``small'' $x$ (i.e. $|x|\leq K$ we have
\be\label{e9}
 \sup_{x\in C}\mathbb{E}\big(e^{1+q(x+\mu(x))^2+2q(x+\mu(x))\sigma(x)\varepsilon_1+q\sigma^2(x)\varepsilon_1^2}\big)<G(K),
\e
for some positive constant $G(K)$.

 {\bf Proof of Claim 1.} Using Assumption $(A_3)$, for $|x|$ large enough, there is a small $\delta>0$ such 
 that $(x+\mu(x))^2\le (1-4\delta)x^2$. Since $1\le\delta(1+qx^2)$ for $|x|$ large enough, it follows that 
 $e^{1+q(x+\mu(x))^2}\le e^{(1-3\delta)(1+qx^2)}$.

By $(A_2)$ there is $M>0$ such that, for all $x$, $\sigma(x)\leq M$. 
If we choose $q$ such that $q M^2<\kappa/2$  then it is enough to show that
$
 \mathbb{E}\big(e^{2q|x+\mu(x)|M|\varepsilon_1|+(\kappa/2)\varepsilon_1^2}\big)\le e^{2\delta qx^2}
$.
By the Cauchy-Schwarz inequality, it suffices to prove
\be\label{e10}
 \sqrt{\mathbb{E}\big(e^{4q|x+\mu(x)|M|\varepsilon_1|}\big)}\sqrt{\mathbb{E}\big(e^{\kappa\varepsilon_1^2}\big)}\le e^{2\delta qx^2}
\e

 By \eqref{a2}, the second term on the left-hand side of \eqref{e10} is the constant $\sqrt{I}$. This is smaller than $e^{\delta qx^2}$ for large enough $|x|$. So, since again by $(A_3)$, $4q|x+\mu(x)|M\le 4qM|x|$ for $|x|$ large enough, it remains to show
$
 \sqrt{\mathbb{E}\big(e^{4qM|x||\varepsilon_1|}\big)}\le e^{\delta qx^2}\,\,\mbox{for large}\,\,|x|
$, or, equivalently,
\be\label{e11}
 \mathbb{E}\big(e^{4qM|x||\varepsilon_1|}\big)\le e^{2\delta qx^2}\,\,\mbox{for large}\,\,|x|.
\e
Applying Lemma \ref{l2.4}, the left-hand side of \eqref{e11} is smaller than $e^{16cq^2M^2|x|^2}$ for some fixed constant $c>0$. Hence, if one chooses $q$ small enough such that $16q^2M^2c<2\delta q$ and $qM^2<\kappa/2$ then \eqref{e11} holds, showing Claim 1.

 {\bf Proof of Claim 2.} By Assumption $(A_2)$, $\mu$ is bounded above on any compact 
 $C=[-K,K]$ by some positive constant $A=A(K)$ and the function 
 $x\mapsto (x+\mu(x))^2$ is also bounded on $C$ by some positive constant $B=B(K)$. 
 %So, with the later choice of $q$, we have the following estimate 
 Applying Cauchy-Schwarz Inequality and \eqref{a2},
\[
 \begin{array}{lll}
 \mathbb{E}\big(e^{1+q(x+\mu(x))^2+2q(x+\mu(x))\sigma(x)\varepsilon_1+q\sigma^2(x)\varepsilon_1^2}\big) &\le& 
 \mathbb{E}\big(e^{1+qB+2q(K+A)M|\varepsilon_1| +(\kappa/2)\varepsilon_1^2}\big)\\
       &\le& e^{(1+qB)}\sqrt{\mathbb{E}\big(e^{4q(K+A)M|\varepsilon_1|}\big)}\sqrt{\mathbb{E}\big(e^{\kappa\varepsilon_1^2}\big)}\\
       &=& e^{(1+qB)}\sqrt{I}\sqrt{\mathbb{E}\big(e^{4q(K+A)M|\varepsilon_1|}\big)}
\end{array}
\]

 We then choose $K$ large enough such that $4q(K+A)M\geq 1$ and we get, by Lemma \ref{l2.4}, that for all $x\in C=[-K,K]$,
 \[
\mathbb{E}\big(e^{1+q(x+\mu(x))^2+2q(x+\mu(x))\sigma(x)\varepsilon+q\sigma^2(x)\varepsilon^2}\big)\le e^{(1+qB)}\sqrt{I}\sqrt{e^{16c'q^2(K+A)^2M^2}},
\]
for a fixed constant $c'>0$. This holds for all $x\in C$, hence \eqref{e9} holds true when taking the supremum over $C$ of the left-hand side of this latter inequality. 
\hfill $_{\blacksquare}$

\begin{proposition}\label{p2.555} The Markov chain $\Phi_t$ satisfies $(DV3+)\,(i)$.
\end{proposition}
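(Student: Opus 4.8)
The plan is to exploit the degenerate structure of the one-step transition kernel $Q$ of $\Phi_t$. Given $\Phi_t=(u,v)$, the next state is $\Phi_{t+1}=(v,w)$, where the first coordinate $v$ is inherited deterministically from the old second coordinate and only $w$ is random, with law $P(v,\cdot)$. This strongly suggests that the two-dimensional Lyapunov functions required in \eqref{cry} should be taken to depend on the second coordinate only, so that the verification of $(DV3+)\,(i)$ for $\Phi_t$ collapses onto the one-dimensional drift estimate already established for $X_t$ in Proposition \ref{p2.5}.

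Concretely, I would set $\widetilde V(u,v):=V(v)=1+qv^2$ and $\widetilde W:=\widetilde V$, with $q>0$ and $V$ as in Proposition \ref{p2.5}; both map $\mathbb{R}^2$ into $[1,\infty)$, as required, and $\widetilde W(u,v)=1+qv^2$ is unbounded on $\mathbb{R}^2$ (which will matter for the later application of Theorem \ref{alap}). For the small set I would take $C_2:=\mathbb{R}\times C$, where $C=[-K,K]$ is the compact interval furnished by Proposition \ref{p2.5}; this $C_2$ is small for $\Phi_t$ by Proposition \ref{p2.3}, and $\Phi_t$ is $\psi$-irreducible and aperiodic by Propositions \ref{p2.2} and \ref{p2.3}, so the standing hypotheses of condition $(DV3+)$ are in place.

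The key computation is then immediate. Since $\widetilde V$ depends only on the second coordinate and $\Phi_{t+1}=(v,w)$ with $w\sim P(v,\cdot)$, one has $e^{\widetilde V(\Phi_{t+1})}=e^{V(w)}$ and hence
\[
Q\,e^{\widetilde V}(u,v)=\int e^{V(y)}\,P(v,dy)=Pe^V(v).
\]
Therefore
\[
\log\big(e^{-\widetilde V}Q\,e^{\widetilde V}\big)(u,v)=\log\big(Pe^V(v)\big)-V(v)=\log\big(e^{-V}Pe^V\big)(v),
\]
and the one-dimensional estimate of Proposition \ref{p2.5} bounds the right-hand side by $-\delta W(v)+b\mathbf{1}_C(v)$. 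Identifying $W(v)=\widetilde W(u,v)$ and $\mathbf{1}_C(v)=\mathbf{1}_{C_2}(u,v)$ gives exactly \eqref{cry} for $\Phi_t$ with the same constants $\delta,b>0$, which is $(DV3+)\,(i)$.

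There is essentially no hard analytic step here: the entire content is the structural observation that freezing the first coordinate lets the problem decouple, after which Proposition \ref{p2.5} does all the work. The only points demanding a little care are verifying $\widetilde V,\widetilde W\ge 1$ and that $C_2$ is genuinely small for $\Phi_t$ (both routine, given Propositions \ref{p2.3} and \ref{p2.5}). The potential pitfall to avoid is the naive choice $\widetilde V(u,v)=V(u)+V(v)$, which leaves an uncontrolled positive term proportional to $V(v)$ in the drift bound and therefore fails; restricting $\widetilde V$ to the second coordinate is precisely what removes it.
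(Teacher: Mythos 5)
Your computation is correct as far as it goes: with $\widetilde V(u,v)=\widetilde W(u,v)=V(v)$ the two-dimensional kernel does collapse onto the one-dimensional one, and the inequality \eqref{cry} then follows from Proposition \ref{p2.5} exactly as you say. The gap is not in the algebra but in the choice of Lyapunov function. The pair $(V,W)$ produced in part $(i)$ cannot be chosen in isolation: the same $W$ reappears in $(DV3+)\,(ii)$ and, crucially, in the hypothesis \eqref{eq18} of Theorem \ref{alap}, which requires $\sup_{(x,y)}\bigl(W_0(x,y)/W(x,y)\bigr)\mathbf{1}_{\{W(x,y)>r\}}\to 0$ for the weight $W_0(x,y)=1+|x|+|y|$ used in Lemma \ref{l2.10}. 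Your $\widetilde W(u,v)=1+qv^2$ is constant in the first coordinate, so on $\{1+qv^2>r\}$ the ratio $W_0/\widetilde W$ blows up as $|u|\to\infty$ and \eqref{eq18} fails. Worse, any $W_0$ that is admissible relative to your $\widetilde W$ must be bounded in the first variable, and then the function $f$ of \eqref{suttog} (which equals $y-x$ wherever $\pi(x)=1$) does not lie in $L_{\infty}^{W_0}$, so Proposition \ref{p2.11} and everything downstream of it collapse.

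The fix is not to drop the first coordinate but to downweight it: the paper (following Proposition 4.1 of \cite{KM2}) takes $V_2(x,y):=V(y)+(\delta/2)W(x)$ and $W_2(x,y):=\tfrac12\bigl(W(x)+W(y)\bigr)$. The extra term $(\delta/2)W(y)$ that $V_2$ picks up at the new state $(y,z)$ is absorbed by half of the $-\delta W(y)$ coming from the one-dimensional drift condition, and the remaining $-(\delta/2)W(y)$ together with the $-(\delta/2)W(x)$ contributed at the old state yields exactly $-\delta W_2(x,y)$. This produces a $W_2$ that is coercive in \emph{both} coordinates, so that \eqref{eq18} holds for $W_0(x,y)=1+|x|+|y|$ and the sublevel sets $C_{W_2}(r)$ are compact (which is also what the proof of Proposition \ref{p2.9} exploits). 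You correctly identified $V(u)+V(v)$ as a choice that leaves an uncontrolled positive term in the drift bound, but the right conclusion from that observation is to shrink the coefficient of the first-coordinate term to $\delta/2$, not to set it to zero.
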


{\bf Proof.} We follow Proposition 4.1 of \cite{KM2} and deduce this statement from 
Proposition \ref{p2.5} above.  Recall
$V(x)=W(x)=1+qx^2$, $C$ and $\delta>0$ from that Proposition.
Take $C_2:=\mathbb{R}\times C$. For $x,y\in\mathbb{R}$
define $V_2(x,y):=V(y)+(\delta/2) W(x)$ and $W_2(x,y):=(1/2)(W(x)+W(y))$.
Then
\begin{eqnarray*}
\log e^{-V_2}Qe^{V_2}(x,y)&=&-V(y)-(\delta/2) W(x) +\log \int_{\mathbb{R}}e^{\frac{\delta}{2}W(y)
+V(z)} P(y,dz)\\ &\leq& -V(y)-(\delta/2) W(x) +(\delta/2) W(y) + [V(y)-\delta W(y)+
b 1_C (y)]\\ &\leq& -\delta W_2(x,y) + b 1_{C_2}(x,y),
\end{eqnarray*}
showing that \eqref{cry} is true with $V_2,W_2$. As $C_2$ has been shown to be small in Proposition \ref{p2.3},
we conclude.
\hfill $_{\blacksquare}$

\begin{proposition}\label{p2.9} The chain $\Phi_t$ satisfies condition $(DV3+)\,(ii)$ as well.
\end{proposition}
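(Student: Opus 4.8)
The plan is to verify $(DV3+)(ii)$ directly, exploiting the explicit two-step transition density of $\Phi_t$ supplied by \eqref{mkkk}. Recall from Proposition \ref{p2.555} that the Lyapunov functions attached to $\Phi_t$ are $V_2(x,y)=V(y)+(\delta/2)W(x)$ and $W_2(x,y)=(1/2)(W(x)+W(y))$, with $V(x)=W(x)=1+qx^2$, so that $W_2(x,y)=1+(q/2)(x^2+y^2)$ grows quadratically. Consequently $\|W_2\|_\infty=\infty$, so every finite $r$ must be treated, and for $r>1$ the sublevel set $C_{W_2}(r)=\{(x,y):x^2+y^2\le 2(r-1)/q\}$ is the compact ball of radius $\rho:=\sqrt{2(r-1)/q}$ (for $r\le 1$ it is empty and there is nothing to prove). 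I would take $t_0:=2$, the smallest horizon for which $\Phi_{t_0}=(X_1,X_2)$ possesses a genuine two-dimensional Lebesgue density; by contrast $\Phi_1=(X_0,X_1)$ is supported on a line and cannot be dominated by an absolutely continuous measure.

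Next I would write out the left-hand side of \eqref{ad}. Starting from $\Phi_0=(u,v)\in C_{W_2}(r)$ one has $X_0=v$ with $|v|\le\rho$, and the event that $\Phi_t$ has not left $C_{W_2}(r)$ before time $t_0+1=3$ forces $\Phi_1=(v,X_1)\in C_{W_2}(r)$ and $\Phi_2=(X_1,X_2)\in C_{W_2}(r)$. Applying \eqref{mkkk} with $t=2$ gives
\[
\mathbb{P}_{(u,v)}\big(\Phi_2\in A,\ \Phi_1,\Phi_2\in C_{W_2}(r)\big)
=\int_{\mathbb{R}^2}1_A(a_1,a_2)\,1_{C_{W_2}(r)}(v,a_1)\,1_{C_{W_2}(r)}(a_1,a_2)\,p(v,a_1)p(a_1,a_2)\,da_1\,da_2 .
\]
Replacing the intermediate indicator $1_{C_{W_2}(r)}(v,a_1)$ by $1$ and bounding $p(v,a_1)\le\bar p(a_1):=\sup_{|v'|\le\rho}p(v',a_1)$ (legitimate since $|v|\le\rho$) yields domination by the measure
\[
\beta_r(A):=\int_{\mathbb{R}^2}1_A(a_1,a_2)\,1_{C_{W_2}(r)}(a_1,a_2)\,\bar p(a_1)\,p(a_1,a_2)\,da_1\,da_2 ,
\]
which is independent of the initial point $(u,v)\in C_{W_2}(r)$ and of $t_0$, exactly as \eqref{ad} demands.

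It then remains to check $\beta_r(e^{V_2})<\infty$. By construction $\beta_r$ is carried by the compact set $C_{W_2}(r)$, on which $e^{V_2}$ is bounded, so it suffices to show $\beta_r$ is finite, i.e. that its density $\bar p(a_1)p(a_1,a_2)$ is bounded on $C_{W_2}(r)$. For $(a_1,a_2)\in C_{W_2}(r)$ both coordinates lie in $[-\rho,\rho]$; by $(A_2)$ the drift $\mu$ is bounded and $\sigma$ is bounded away from $0$ there, so the argument $\big(a_2-\mu(a_1)-a_1\big)/\sigma(a_1)$ of $\gamma$ ranges over a fixed compact interval on which $\gamma$ is bounded by $(A_1)$; combined with the boundedness of $1/\sigma(a_1)$ this shows $p(a_1,a_2)$ is bounded on $C_{W_2}(r)$. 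The same estimate applied to $p(v',a_1)$ for $|v'|,|a_1|\le\rho$ shows $\bar p$ is bounded on $[-\rho,\rho]$. Hence $\beta_r$ is a finite measure and $\beta_r(e^{V_2})<\infty$, completing $(DV3+)(ii)$.

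The only genuinely delicate point is this last one: the finiteness of $\beta_r$ and of $\beta_r(e^{V_2})$ rests on the transition density being bounded over the relevant compact region, which is precisely where the local boundedness of $\mu$ and the positive lower bound on $\sigma$ from $(A_2)$, together with the local boundedness of $\gamma$ from $(A_1)$, enter. The choice $t_0=2$ is what makes the dominating measure absolutely continuous in the first place, so that these local regularity properties can be brought to bear.
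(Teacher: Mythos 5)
Your proof is correct and takes essentially the same route as the paper: choose $t_0=2$, dominate the two-step transition probability into $A\cap C_{W_2}(r)$ by a measure carried by the compact sublevel set, using the boundedness of $p$ on compacts (from $(A_1)$, $(A_2)$) to get finiteness, whence $\beta_r(e^{V_2})<\infty$ is immediate. The only immaterial difference is that the paper bounds the density by a constant $J^2$ and takes $\beta_r$ to be a multiple of Lebesgue measure restricted to $C_{W_2}(r)$, whereas you retain the density $\bar p(a_1)p(a_1,a_2)$ itself.
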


 {\bf Proof.}  Consider $V_2(x,y),W_2(x,y)$, defined in the previous
Proposition. We choose $t_0:=2$, and let $r<\|W\|_{\infty}=\infty$.
%If $0\le r< 1$ then \eqref{ad} is trivial.
 
It suffices to prove existence of a measure $\beta_r$ on $\mathcal{B}(\mathbb{R}^2)$ such that,
\be\label{e26}
 \beta_r(e^{V_2})<\infty\,\,\mbox{and}\,\,Q^2\big((x,y), D\cap C_W(r) \big)\le\beta_r(D),
\e
for all $(x,y)\in C_W(r)$ and all $D\in\mathcal{B}(\mathbb{R}^2)$.

% We have $C_W(r)=\{(x,y):1+q(x^2+y^2)\le r\}=\{(x,y):x^2+y^2\le (r-1)/q\}$.
Let $H$ denote the projection of $C_W(r)$ on the first coordinate (which is the same as its projection on
the second coordinate). By $(A_1)$ and $(A_2)$, the function $p(x,y)$ is bounded on $H\times H$ by a constant $J$.
Hence
$$
Q^2\big((x,y), D\cap C_W(r) \big)\le \int_{D\cap C_W(r)} p(y,a_1)p(a_1,a_2)da_1 da_2\leq J^2\lambda_2(D\cap C_W(r))=:
\beta_r(D).
$$
Finally, it is clear that $\beta_r(e^{V_2})<\infty$ as it
is the Lebesgue-integral of a continuous function on a compact of $\mathbb{R}^2$. 
\hfill $_{\blacksquare}$

\begin{corollary}\label{c2.7} The Markov chain $\Phi_t$ has an invariant probability measure $\nu$ equivalent to $\lambda_2$.
\end{corollary}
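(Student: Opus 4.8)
The plan is to split the statement into the existence of an invariant probability measure, which comes for free from Theorem \ref{alap}, and the equivalence $\nu\sim\lambda_2$, which I would establish by exploiting the explicit density of the \emph{two-step} kernel $Q^2$ recorded in \eqref{mkkk} together with the everywhere-positivity already used in Proposition \ref{p2.2}. For existence I would simply check the hypotheses of Theorem \ref{alap}: Propositions \ref{p2.555} and \ref{p2.9} show that $\Phi_t$ satisfies $(DV3+)$, and the Lyapunov function $W_2(x,y)=\tfrac12(W(x)+W(y))=\tfrac12(2+q(x^2+y^2))$ built from $W(x)=1+qx^2$ is unbounded. Hence $\Phi_t$ admits an invariant probability measure $\nu$, i.e. $\nu=\nu Q$; iterating gives $\nu=\nu Q^2$, which is the form I shall use throughout.

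To prove $\nu\ll\lambda_2$, the key point is that one must pass to $Q^2$: the one-step kernel $Q$ is singular with respect to $\lambda_2$ because the first coordinate of $\Phi_{t+1}=(X_t,X_{t+1})$ is copied deterministically from the second coordinate of $\Phi_t$. By \eqref{mkkk}, however, $Q^2((u,v),A)=\int_A p(v,a_1)p(a_1,a_2)\,da_1\,da_2$ is absolutely continuous. Writing $z=(u,v)$, using $\nu=\nu Q^2$ and Tonelli (all integrands nonnegative) I would obtain
\begin{equation*}
\nu(A)=\int_{\mathbb{R}^2} Q^2(z,A)\,\nu(dz)=\int_A\Big(\int_{\mathbb{R}^2} p(v,a_1)p(a_1,a_2)\,\nu(du,dv)\Big)\,da_1\,da_2,
\end{equation*}
so that $\nu$ possesses a genuine density with respect to $\lambda_2$, and in particular $\nu\ll\lambda_2$.

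For the reverse inclusion $\lambda_2\ll\nu$, I would argue by contradiction. Suppose $\lambda_2(A)>0$. As noted in the proof of Proposition \ref{p2.2}, the integrand $p(v,a_1)p(a_1,a_2)$ is strictly positive everywhere, whence $Q^2(z,A)>0$ for \emph{every} $z\in\mathbb{R}^2$. Since $\nu$ is a probability measure, integrating a strictly positive function against it yields $\nu(A)=\int_{\mathbb{R}^2}Q^2(z,A)\,\nu(dz)>0$. Thus $\lambda_2(A)>0$ forces $\nu(A)>0$, i.e. $\lambda_2\ll\nu$. Combining the two inclusions gives $\nu\sim\lambda_2$, completing the argument.

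I do not expect a serious obstacle here; the single point requiring care is the singularity of the one-step kernel, which rules out a direct density argument via $Q$. Working instead with $Q^2$—exactly as in Proposition \ref{p2.2}—simultaneously supplies the density needed for $\nu\ll\lambda_2$ and the uniform positivity needed for $\lambda_2\ll\nu$, so both halves of the equivalence flow from the same computation.
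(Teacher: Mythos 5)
Your proof is correct, and the two halves split the same way the paper does, but your treatment of $\lambda_2\ll\nu$ is genuinely different. For $\nu\ll\lambda_2$ both arguments are essentially identical: the paper also observes that the two-step kernel $Q^2$ given by \eqref{mkkk} is $\lambda_2$-absolutely continuous and concludes via invariance (you make the step $\nu=\nu Q^2$ and the Tonelli computation explicit, which the paper leaves implicit). For the reverse direction, however, the paper goes through the general theory of \cite{MT}: it uses the definitions of recurrent and positive chains together with Proposition 10.1.1 and Theorem 10.4.9 there to conclude that $\nu$ is equivalent to a maximal irreducibility measure $\psi$, and then Proposition 4.2.2~$(ii)$ to get $\psi\gg\lambda_2$. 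You instead bypass all of this with the one-line observation that the everywhere-positivity of $p(v,a_1)p(a_1,a_2)$ (already exploited in Proposition \ref{p2.2}) gives $Q^2(z,A)>0$ for every $z$ whenever $\lambda_2(A)>0$, so that $\nu(A)=\int Q^2(z,A)\,\nu(dz)>0$ since $\nu$ is a probability measure. Your route is more elementary and self-contained, trading the citation of deep structural results on $\psi$-irreducible positive recurrent chains for a direct use of the explicit density; it buys transparency at no cost in generality for this particular chain, whereas the paper's argument would survive even if the two-step density were only positive on a $\psi$-full set rather than everywhere. Both proofs are valid.
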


 {\bf Proof.} Theorem \ref{alap} implies that $\Phi_t$ has an invariant probability measure, say, $\nu$.

 Furthermore, from \eqref{mkkk}, $\mathbb{P}(\Phi_2\in\cdot\vert \Phi_0=(x,y))$ is $\lambda_2$-absolutely continuous 
 for each $(x,y)\in\mathbb{R}^2$, hence we get $\nu \ll \lambda_2$. On the other hand, 
 from the definitions of recurrent and positive chains on pages 186 and 235 of \cite{MT}, 
 it follows by Proposition 10.1.1 and Theorem 10.4.9 of the same reference that $\nu\sim\psi$,
 where $\psi$ is a maximal irreducibility measure. Hence $\psi \gg \lambda_2$ by Proposition 4.2.2 $(ii)$ in \cite{MT}, so
 we get $\nu \gg \lambda_2$. It follows that $\nu\sim\lambda_2$, as required. \hfill\  $_{\blacksquare}$

 We now proceed to a proper investigation of asymptotic arbitrage exponential opportunities in the wealth model \eqref{eq3}. Inspecting again the dynamics of the investor's wealth process $V_t^{\pi}$ in this model, for any Markovian strategy $\pi_t$, we may express it in the form
\be\label{e32}
  V_t^{\pi} = V_0\exp\big(\sum_{n=1}^tf(\Phi_n)\big) = V_0\exp\Big(t\frac{\sum_{n=1}^tf(\Phi_n)}{t}\Big),\,\,\mbox{for all}\,\,t\ge 1,
\e
where the function $f$ is defined by 
\begin{equation}\label{suttog}
f(x,y):= \log\big((1-\pi(x))+\pi(x)\exp(y-x)\big),\quad x,y\in\mathbb{R},
\end{equation}
and $\Phi_t=(X_{t-1},X_t)$, $t\in\mathbb{N}$, is the Markov chain
in consideration. We will need to insure that, for any Markovian strategy $\pi_t$, the sequence of random variables $\log(V_t^{\pi}/V_0)=\sum_{n=1}^tf(\Phi_n)$ satisfies a large deviation principle ($LDP$) hypotheses. That is, we will need that the limit $\Lambda_f(\theta):=\lim_{t\to\infty}\frac{1}{t}\log\mathbb{E}(e^{\theta\sum_{n=1}^t f(\Phi_n)})$ exists, for each $\theta\in\mathbb{R}$, with $\Lambda_f$ satisfying the remaining conditions in G\"artner-Ellis Theorem as stated in Theorem 2.3.6 in \cite{DZ}.

% By now, we have estabished that the Markov chain $\Phi_t$ is $\psi$-irreducible, aperiodic and satisfies the drift conditions $(DV3+)\,(i),\,(ii)$ in \cite{KM2}. 
Define the function $W_0:\mathbb{R}^2\to [1,\infty)$ 
by $W_0(x,y):=1+|x|+|y|$, for all $x,y\in\mathbb{R}$. Clearly, $W_0$ satisfies \eqref{eq18} with $d=2$ and $W=W_2$.
%which is Condition (6) on p. 7 of \cite{KM2}. Hence, under all these conditions, the ergodic results we will cite from \cite{KM2} do hold. We will simply evoke them below.
% for the proof of specific results attached with the wealth model \eqref{eq3}.

% But first, for $\theta\in\mathbb{R}$, we observe that $\theta\sum_{n=1}^t f(\Phi_n)=\sum_{n=1}^tF_{\theta}(\Phi_n)$ where $F_{\theta}=\theta f$. 

\begin{lemma}\label{l2.10} The function $f$ belongs to the space $L_{\infty}^{W_0}$.
\end{lemma}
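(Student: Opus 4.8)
The plan is to establish a uniform bound on the ratio $|f(x,y)|/W_0(x,y)$ by estimating the logarithm directly, and in fact I expect to obtain the clean bound $\|f\|_{W_0}\le 1$. First I would observe that, since $\pi(x)\in[0,1]$, the quantity $(1-\pi(x))+\pi(x)e^{y-x}$ is a convex combination of the two positive numbers $1$ and $e^{y-x}$, hence lies between them:
\[
\min(1,e^{y-x})\le (1-\pi(x))+\pi(x)e^{y-x}\le \max(1,e^{y-x}).
\]

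Since the logarithm is increasing, applying it throughout this chain yields
\[
\min(0,y-x)\le f(x,y)\le\max(0,y-x),
\]
so that $|f(x,y)|\le|y-x|$ for every $(x,y)\in\mathbb{R}^2$. Then by the triangle inequality $|y-x|\le|x|+|y|\le 1+|x|+|y|=W_0(x,y)$, whence $|f(x,y)|/W_0(x,y)\le 1$ for all $(x,y)$. Taking the supremum over $(x,y)$ gives $\|f\|_{W_0}\le 1<\infty$, which is precisely the assertion $f\in L_\infty^{W_0}$.

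There is no serious analytic obstacle here; the computation is elementary. The one point that genuinely matters is the convex-combination observation, which relies crucially on the constraint $\pi(x)\in[0,1]$ imposed by the no-short-selling and no-borrowing assumption. Without this constraint the argument of the logarithm could become negative (so that $f$ would not even be real-valued) or could grow faster than $W_0$, and the membership $f\in L_\infty^{W_0}$ could fail. It is also worth noting that the resulting bound is uniform in the strategy $\pi$, a feature that should be convenient when these estimates are combined with Theorem \ref{alap}.
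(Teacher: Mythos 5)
Your proof is correct. It differs slightly in execution from the paper's: the paper bounds $f$ from above by $\log(1+e^{y-x})\le 1+|x|+|y|$ and from below by a case split on whether $\pi(x)\le 1/2$ or $\pi(x)>1/2$, arriving at $|f(x,y)|\le c(1+|x|+|y|)$ for an unspecified constant $c$. Your convex-combination observation, namely $\min(1,e^{y-x})\le(1-\pi(x))+\pi(x)e^{y-x}\le\max(1,e^{y-x})$, obtains both bounds in one stroke and yields the sharper and cleaner estimate $|f(x,y)|\le|y-x|\le W_0(x,y)$, hence $\|f\|_{W_0}\le 1$. Both arguments rest on exactly the same essential fact — that $\pi(x)\in[0,1]$ keeps the argument of the logarithm sandwiched between $1$ and $e^{y-x}$ (up to constants, in the paper's version) — so the mathematical content is the same; yours is simply the tidier packaging, and the explicit uniformity in $\pi$ that you point out is a genuine (if minor) bonus, since the paper's constant $c$ is also independent of $\pi$ but this is less visible there.
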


 {\bf Proof.} For all $x,y\in\mathbb{R}$, since $\pi(x)\in\ [0,1]$, we have $1-\pi(x)+\pi(x)\exp(y-x)\leq 1+\exp(y-x)$. It follows that $f(x,y)\leq \vert x\vert+\vert y\vert +1$.

 On the other hand, for $0\leq a\leq 1/2$, we have $1-a+a \exp(y-x)\geq 1/2$. And for $a>1/2$, we have $1-a+a \exp(y-x)\geq (1/2)\exp(y-x)$. To sum up, we obtain that $f(x,y)\geq \log (1/2)-\vert x\vert -\vert y\vert$ for all $x,y\in\mathbb{R}$.

 Hence $\vert f(x,y)\vert\leq c(1 +\vert x\vert +\vert y\vert)$, for some constant $c>0$, and the claim follows. \hfill $_{\blacksquare}$

\begin{proposition}\label{p2.11} Let $\pi_t$ be any Markovian strategy in the wealth model \eqref{eq3}. Then 
$\Lambda_f(\theta):=\lim_{t\to\infty}\frac{1}{t}\log\mathbb{E}_{(X_{-1},X_0)}\big(e^{\theta\sum_{n=1}^tf(\Phi_n)}\big)$, $\theta\in\mathbb{R}$ is a well-defined analytic function so the averages $\frac{1}{t}\log(V_t^{\pi}/V_0)=\frac{1}{t}\sum_{n=1}^tf(\Phi_n)$ satisfy a large deviations estimate with good rate function $\Lambda_f^*$ (the convex conjugate of $\Lambda_f$).
\end{proposition}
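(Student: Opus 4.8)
The plan is to read off $\Lambda_f$ directly from Theorem \ref{alap} applied to the chain $Z_t=\Phi_t$, and then feed the resulting regularity into the G\"artner--Ellis theorem. The groundwork is already in place: Propositions \ref{p2.555} and \ref{p2.9} show that $\Phi_t$ satisfies $(DV3+)$ with the \emph{unbounded} function $W_2$, and Lemma \ref{l2.10} shows that $f\in L_\infty^{W_0}$ with $W_0(x,y)=1+|x|+|y|$ satisfying \eqref{eq18}.

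First I would fix $\theta\in\mathbb{R}$ and note that $\theta f\in L_\infty^{W_0}$, since $L_\infty^{W_0}$ is a vector space. Theorem \ref{alap} then applies with $g=\theta f$: the limit $\Lambda(\theta f)=\lim_{t\to\infty}\frac{1}{t}\log\mathbb{E}_z[\exp(\theta\sum_{n=1}^t f(\Phi_n))]$ exists, is finite, and is independent of the initial point $z$. Taking $z=(X_{-1},X_0)$ (a constant in our setting) yields that $\Lambda_f(\theta)=\Lambda(\theta f)$ is well-defined and finite for every $\theta\in\mathbb{R}$; in particular its effective domain is all of $\mathbb{R}$.

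Second comes analyticity. Theorem \ref{alap} delivers analyticity of $\theta\mapsto\Lambda(g_0+\theta f)$ only in a neighbourhood of $\theta=0$, for each fixed $g_0$. To upgrade this to analyticity of $\Lambda_f$ on all of $\mathbb{R}$, I would re-centre: given any $\theta_0\in\mathbb{R}$, apply the theorem with $g_0:=\theta_0 f$ (which lies in $L_\infty^{W_0}$) and $g:=f$, obtaining that $\theta\mapsto\Lambda(\theta_0 f+\theta f)=\Lambda_f(\theta_0+\theta)$ is analytic near $\theta=0$; that is, $\Lambda_f$ is analytic in a neighbourhood of $\theta_0$. Since $\theta_0$ was arbitrary, $\Lambda_f$ is analytic on all of $\mathbb{R}$.

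Finally I would invoke the G\"artner--Ellis theorem (Theorem 2.3.6 of \cite{DZ}) for the empirical means $Y_t:=\frac{1}{t}\sum_{n=1}^t f(\Phi_n)=\frac{1}{t}\log(V_t^\pi/V_0)$, whose scaled logarithmic moment generating function is exactly $\Lambda_f$. The hypotheses hold: $\Lambda_f$ is finite on all of $\mathbb{R}$, so $0$ lies in the interior of its effective domain; being analytic, $\Lambda_f$ is differentiable throughout, and, its domain having no boundary, the steepness requirement is vacuous, so $\Lambda_f$ is essentially smooth; analyticity also gives continuity, a fortiori lower semicontinuity. G\"artner--Ellis then yields the full LDP for $Y_t$ at speed $t$ with rate function $\Lambda_f^*$, and finiteness of $\Lambda_f$ near the origin guarantees that $\Lambda_f^*$ has compact sublevel sets, i.e. is a good rate function. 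Most of the effort has been absorbed into Theorem \ref{alap} and the $(DV3+)$ verification, so the only genuinely non-mechanical step is the re-centring argument promoting local analyticity to global analyticity; the essential-smoothness check for G\"artner--Ellis is then immediate precisely because the effective domain has no boundary.
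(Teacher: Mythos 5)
Your proposal is correct and follows essentially the same route as the paper, whose proof is a one-line appeal to Theorem \ref{alap} together with the G\"artner--Ellis theorem, noting that analyticity implies essential smoothness. The only addition on your side is the careful re-centring argument (taking $g_0=\theta_0 f$) to promote local to global analyticity and the explicit check that finiteness of $\Lambda_f$ on all of $\mathbb{R}$ makes steepness vacuous and the rate function good --- details the paper leaves implicit but which are exactly the right ones to fill in.
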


 {\bf Proof.} By Theorem \ref{alap}, $\Lambda_f$ verifies the conditions of the G\"artner-Ellis Theorem 2.3.6 in \cite{DZ} 
 (analyticity implies essential smoothness). Applying this theorem we conclude.
\hfill $_{\blacksquare}$

\begin{lemma}\label{l2.14} If $(RC_+)$ is satisfied then the Markovian strategy $\pi^+(x):=\mathbf{1}_{R^+}(x)$
is such that
\be
\nu(f)=\mathbb{E}\Big(\log\big((1-\pi^+(\tilde{X_0}))+\pi^+(\tilde{X_0})\exp(\tilde{X_1}-\tilde{X_0})\big)\Big)>0,
\e
where the pair of random variables $(\tilde{X_0},\tilde{X_1})$ has distribution $\nu$.
\end{lemma}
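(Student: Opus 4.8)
The plan is to first simplify the integrand $f$ for the specific strategy $\pi^+=\mathbf{1}_{R^+}$, and then to identify $\nu(f)$ with the integral of the drift $\mu$ over the positive-drift region $R^+$ against the stationary marginal of $X$. Substituting $\pi(x)=\mathbf{1}_{R^+}(x)$ into \eqref{suttog}, one checks directly that $f(x,y)=y-x$ when $x\in R^+$ and $f(x,y)=0$ otherwise, so that $f(x,y)=\mathbf{1}_{R^+}(x)(y-x)$ for all $x,y$. I note first that $\nu(f)$ is well-defined and finite: by Lemma \ref{l2.10} we have $f\in L_{\infty}^{W_0}$, and since $W_0\in L_{\infty}^{W_0}$ as well, Theorem \ref{alap} guarantees $\nu(W_0)<\infty$, whence $\nu(|f|)\le \|f\|_{W_0}\,\nu(W_0)<\infty$.

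Next I would disintegrate $\nu$ along the first coordinate. Writing $m$ for the first marginal of $\nu$ (which is the stationary law of $X$), the joint law of consecutive states of the stationary chain factors as $\nu(dx,dy)=m(dx)P(x,dy)$; that is, the conditional law of $\tilde{X_1}$ given $\tilde{X_0}=x$ is the transition kernel $P(x,\cdot)$. Since, conditionally on $\tilde{X_0}=x$, one has $\tilde{X_1}=x+\mu(x)+\sigma(x)\varepsilon$ with $\mathbb{E}\varepsilon=0$ by $(A_4)$ (and $\mathbb{E}|\varepsilon|<\infty$, so the increment is integrable), the inner conditional mean is $\int_{\mathbb{R}}(y-x)P(x,dy)=\mu(x)$. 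Conditioning therefore gives
\be
\nu(f)=\int_{\mathbb{R}}\mathbf{1}_{R^+}(x)\Big(\int_{\mathbb{R}}(y-x)P(x,dy)\Big)m(dx)=\int_{R^+}\mu(x)\,m(dx). \nonumber
\e

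It remains to prove that this last integral is strictly positive, which is where the hypotheses $(RC_+)$ and Corollary \ref{c2.7} enter. By Corollary \ref{c2.7}, $\nu\sim\lambda_2$; writing $g>0$ ($\lambda_2$-a.e.) for its density and applying Fubini, the marginal density $x\mapsto\int_{\mathbb{R}}g(x,y)\,dy$ of $m$ is strictly positive for $\lambda$-almost every $x$, so $m\sim\lambda$. Condition $(RC_+)$ states $\lambda(R^+)>0$, hence $m(R^+)>0$. Finally, $\mu>0$ everywhere on $R^+$ by definition, and since $R^+=\bigcup_n\{x:\mu(x)>1/n\}$, the positivity of $m(R^+)$ forces $m(\{x:\mu(x)>1/n\})>0$ for some $n$, giving $\int_{R^+}\mu\,dm\ge (1/n)\,m(\{x:\mu(x)>1/n\})>0$. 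This proves $\nu(f)>0$.

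I expect the only genuinely delicate point to be the disintegration step, specifically justifying that under the stationary measure $\nu$ the conditional law of the second coordinate given the first is exactly the transition kernel $P(x,\cdot)$, and checking the integrability needed to pull the conditional expectation inside the indicator. Everything else, namely the simplification of $f$, the equivalence $m\sim\lambda$, and the final positivity argument, is routine once these are in place.
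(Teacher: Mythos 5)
Your proof is correct and follows essentially the same route as the paper's: both reduce $\nu(f)$ for $\pi^+=\mathbf{1}_{R^+}$ to $\int_{R^+}\mu(x)\,m(dx)$ by exploiting that, under the invariant measure $\nu$ of $\Phi_t$, the conditional law of the second coordinate given the first is $P(x,\cdot)$ (the paper phrases this as the existence of $(\tilde X_0,\varepsilon_1)$ with $\varepsilon_1$ independent of $\tilde X_0$ and $\tilde X_1=\tilde X_0+\mu(\tilde X_0)+\sigma(\tilde X_0)\varepsilon_1$), and both conclude positivity from $(RC_+)$ together with the $\lambda$-a.e.\ positivity of the marginal density supplied by Corollary \ref{c2.7}. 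Your additional remarks on the finiteness of $\nu(f)$ and the decomposition $R^+=\bigcup_n\{\mu>1/n\}$ only make explicit details the paper leaves implicit.
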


 {\bf Proof.} Since $\nu$ is a probability measure on $\mathcal{B}(\mathbb{R}^2)$ and is invariant for the chain $\Phi_t=(X_t,X_{t+1})$, there is a pair of $\mathbb{R}$-valued random variables $(\tilde{X_0},\varepsilon_1)$ such that $\varepsilon_1$ is independent of $\tilde{X_0}$ and, defining 
 $\tilde{X_1}=\tilde{X_0}+\mu(\tilde{X_0})+\sigma(\tilde{X_0})\varepsilon_1$, the pair $(\tilde{X_0},\tilde{X_1})$ has distribution $\nu$. For all $x\in\mathbb{R}$,
\be
 \begin{array}{lllll}
 \mathbb{E}\big(\tilde{X_1}\mid \tilde{X_0}=x\big) &=& \mathbb{E}\big(x+\mu(x)+\sigma(x)\varepsilon_1\mid \tilde{X_0}=x\big)\\
                                  &=& x+\mu(x)+\sigma(x)\mathbb{E}(\varepsilon_1\mid \tilde{X_0}=x)\\
                                   &=& x+\mu(x)+\sigma(x)\mathbb{E}(\varepsilon_1)\\
                                  &=& x+\mu(x), \nonumber\\

\end{array}
\e
since $\varepsilon_1$ independent of $\tilde{X}_0$. It follows that if $x\in R_+$ then we have
\begin{equation}\label{mkk}
\mathbb{E}(\tilde{X_1}\mid \tilde{X_0}=x)>x.
\end{equation}

 Consider now the explicitly defined Markovian strategy $\pi^+(x):={\bf 1}_{R^+}(x)$ for $x\in\mathbb{R}$, which is constructed as follows: at each time $t>0$, we invest all the current wealth in the stock if the $\log$-market price of risk is above $0$, and we put everything in the bank account otherwise. Given this strategy, consider the corresponding $f(x,y)= \log\big((1-\pi^+(x))+\pi^+(x)\exp(y-x)\big)$, $(x,y)\in\mathbb{R}^2$. Since by definition $\nu(f)=\int_{\mathbb{R}^2}f(x,y)\nu(dx,dy)$, we have $\nu(f)=\mathbb{E}\Big(\log\big((1-\pi^+(\tilde{X_0}))+\pi^+(\tilde{X_0})\exp(\tilde{X_1}-\tilde{X_0})\big)\Big)$. Next, by Corollary \ref{c2.7}, $\nu$ has a $\lambda_2$-a.e. positive density with respect to $\lambda_2$, hence its $\tilde{X}_0$-marginal, denoted by $\eta$, has a $\lambda$-a.e. positive density $\ell(x)$. Therefore we obtain that
\[
\begin{array}{llll}
 \nu(f) &=& \int_{\mathbb{R}}\mathbb{E}\big(\log((1-\pi^+(x))+\pi^+(x)\exp(\tilde{X_1}-x))\mid \tilde{X_0}=x\big)\eta(dx) \\
        &\ge& \int_{{R}^+}\mathbb{E}\big(\log\exp(\tilde{X_1}-x)\mid \tilde{X_0}=x\big)\eta(dx) \\
        &=& \int_{{R}^+}\mathbb{E}\big(\tilde{X_1}-x\mid \tilde{X_0}=x\big)\ell(x)\lambda(dx)\\
        &>& 0,
\end{array}
\]
by \eqref{mkk}, showing the lemma. \hfill $_{\blacksquare}$

%\begin{remark}
%In the particular case where $\epsilon_t$ are Gaussian 
%\end{remark} 

 {\bf Proof of Theorem \ref{t2.15}.} If $f$ is $\nu$-a.s. constant then the statement is trivial. If not, then $\sigma^2(f)>0$
by the argument of  Theorem 5 in \cite{MR}. Proposition \ref{p2.11} says that $\frac{1}{t}\log(V_t^{\pi^+}/V_0)$ satisfies 
a large deviations principle with good rate function $\Lambda_f^*$.
In particular,
applying the upper large deviations inequality $(2.3.7)$ of Theorem 2.3.6 in \cite{DZ} we get
\be\label{ineq25}
 \limsup_{t\to\infty}\frac{1}{t}\log\mathbb{P}\Big(\frac{1}{t}\log(V_t^{\pi^+}/V_0)<\nu(f)/2\Big)\le 
 -\inf_{x\in (-\infty,\nu(f)/2]}\Lambda_f^*(x),
\e
where $\nu(f)>0$ by Lemma \ref{l2.14}. By Corollary \ref{kp}, $\Lambda_f^*(\nu(f)/2)>0$ and $\nu(f)$ is the unique minimiser of $\Lambda_f^*$. By strict convexity, $\Lambda_f^*$ is decreasing on $(-\infty,\nu(f)]$. These imply that the right hand side of \eqref{ineq25} is equal to $-\Lambda_f^*(\nu(f)/2)$. Hence
\be\label{e38}
 \mathbb{P}\big(V_t^{\pi^+}\ge e^{\log(V_0)+\nu(f)t/2}\big)\ge 1-e^{-t\Lambda_f^*(\nu(f)/2)}\,\,\mbox{for large}\,\,t,
\e
and the result follows. \hfill $_{\blacksquare}$

The following two examples fall outside the scope of \cite{MR} but can be treated using Theorem \ref{t2.15} above.

\begin{example}\label{ex2.16} \textrm{\bf Stable autoregressive process.}\hfill

 \emph{Consider the model $S_t:=e^{X_t}$ where $X_t$ is a stable
autoregressive process (that is, the  discrete-time version of the Ornstein-Uhlenbeck process):
\be\label{e2.40}
X_{t+1}=\alpha X_t+\varepsilon_{t+1},\,\,\mbox{for all}\,\,t\ge 1,
\e
where $0<\vert \alpha\vert <1$, $X_0$ is constant and the $\varepsilon_t$ are i.i.d. $\mathcal{N}(0,1)$.}
%If $\beta=1$, $X_t$ is an auto-regressive process of order 1, usually denoted $AR(1)$.

 \emph{In this typical example, the drift and volatility functions are identified as $\mu(x)=(\alpha-1)x$ and $\sigma(x)=1$, for all $x\in\mathbb{R}$. All the assumptions $(A_1)-(A_4)$ on $\mu$, $\sigma$ and on the $\varepsilon_t$s trivially hold.}
%For instance, to get $(A_4)$, consider any $\kappa\in (0,1/2)$, one obtains \eqref{a2} by squaring the integral $I$ and changing to polar coordinates to compute it.}

 \emph{Next, $R^+=(-\infty,0)$. Obviously, $(RC_+)$ holds.  
 It follows that the trading opportunity $\pi_t^+=\pi^+(X_{t-1})$, with $\pi^+:={\bf 1}_{R^+}$ realizes an $AEA$ with $GDPF$, by Theorem \ref{t2.15}.} \hfill $_{\blacksquare}$
\end{example}

\begin{example}\label{exnew} \emph{\bf A Cox-Ingersoll-Ross--type process.}\hfill

{\rm In mathematical finance the process $H_t$ described by the stochastic
differential equation
\begin{equation}\label{cir}
dH_t=-\beta H_t dt+ \sigma\sqrt{|H_t|}dW_t
\end{equation}
is often used to model stochastic volatility or the short rate in bond markets,
here $W_t$ is a Brownian motion.
We present here a slight modification of the discretization of this model.
The modifications are necessary, since the volatility of $H_t$ is neither
bounded above nor bounded away from $0$.

Let us define the log-price process by
$$
X_{t+1}=\alpha X_t +\sigma \min\{\max\{\sqrt{|X_t|},c_1\},c_2\}\varepsilon_t,
\quad t\geq 1,
$$
where $|\alpha|<1$, $\sigma>0$, $0<c_1<c_2$ are given constants and $\varepsilon_t$
is $\mathcal{N}(0,1)$. It is easy to check that this process
also satisfies the conditions of Theorem \ref{t2.15}.}

\end{example}

\section{Utility-based Asymptotic Arbitrage}

% In this section, the stock log-price process $X_t$, strategies $\pi_t$ and the corresponding wealth process $V_t^{\pi}$ are still assumed relative to the same models and the filtered probability space $(\Omega, \mathcal{F}, \mathbb{F}, \mathbb{P})$ of Section 1.

 We consider risk-averse investors with initial capital $V_0=x\in (0,\infty)$ who express their preferences
% over a risky investment in the market 
in terms of a utility function $U:(0,\infty)\to\mathbb{R}$, where $U$ belongs to the subclass of Hyperbolic Absolute Risk Aversion ($HARA$) 
utility functions $U(x)=x^{\alpha}$, with $0<\alpha< 1$, or $U(x)=-x^{\alpha}$, with $\alpha< 0$, for all $x\in (0,\infty)$. 
The parameter $\alpha$ is related to risk-aversion: the larger $-\alpha$ is, the more afraid investors become of losses, see \cite{FSd}.

 As mentioned in the introductory section, in this paper we do not intend to solve the finite horizon utility maximization problem which is well-discussed in the 
 literature  and which consists in finding the maximal expected utility $u(x):=\sup_{\pi}\mathbb{E}U(V_T^{\pi})$ together with an optimal strategy $(\pi^*_t)_{1\le t\le T}$ verifying $u(x)=\mathbb{E}U(V_T^{\pi^*})$. Instead, we focus on trading opportunities that provide (rapidly) increasing expected utilities for the agents as the time horizon tends to infinity, in the
spirit of \cite{Dk} and \cite{FSr}. 
%More importantly, it appears in the results below that $AEA$ (with $GDP$ of failure) opportunities allow us to settle questions raised at the end of the introduction.

 %Indeed, 
Consider first the subclass of power utility functions $U(x):=x^{\alpha}$, with $0<\alpha<1$, for $x\in (0,\infty)$. 

\begin{proposition}\label{p3.1} If a trading strategy $\pi_t$ realizes an $AEA$  then there is a constant $b>0$ such that
\be
 \mathbb{E}U(V_t^{\pi})\ge e^{\alpha bt},\,\,\mbox{for all large enough }\,\, t.
\e
\end{proposition}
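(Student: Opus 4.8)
The plan is to bound the expected utility from below by discarding all of the probability mass except that carried by the favorable event furnished by the $AEA$ property. Since $0<\alpha<1$, the utility $U(x)=x^{\alpha}$ is nonnegative and strictly increasing on $(0,\infty)$, and the wealth $V_t^{\pi}$ is strictly positive (as $\pi_t\in[0,1]$ and $S_t/S_{t-1}>0$), so every quantity below is well-defined and nonnegative.

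First I would unwind the definition of $AEA$ (Definition \ref{d1}$(i)$, inequality \eqref{ineq1}): there is a fixed $b>0$ such that for every $\epsilon>0$ one has $\mathbb{P}(V_t^{\pi}\ge e^{bt})\ge 1-\epsilon$ for all $t\ge t_{\epsilon}$. Fixing, say, $\epsilon=1/2$ produces a threshold $t_0:=t_{1/2}$ with $\mathbb{P}(V_t^{\pi}\ge e^{bt})\ge 1/2$ for all $t\ge t_0$. Next, using monotonicity and nonnegativity of $U$, I would restrict the expectation to the event $\{V_t^{\pi}\ge e^{bt}\}$, so that $\mathbb{E}U(V_t^{\pi})\ge\mathbb{E}\big(U(V_t^{\pi})\mathbf{1}_{\{V_t^{\pi}\ge e^{bt}\}}\big)\ge U(e^{bt})\,\mathbb{P}(V_t^{\pi}\ge e^{bt})=e^{\alpha bt}\,\mathbb{P}(V_t^{\pi}\ge e^{bt})$. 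Combining this with the probability lower bound yields $\mathbb{E}U(V_t^{\pi})\ge\tfrac12 e^{\alpha bt}$ for all $t\ge t_0$.

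Finally, to absorb the constant factor $1/2$ into the exponential rate, I would pick any $b'$ with $0<b'<b$. Since $\tfrac12 e^{\alpha bt}=e^{\alpha bt-\log 2}\ge e^{\alpha b't}$ as soon as $\alpha(b-b')t\ge\log 2$, the inequality $\mathbb{E}U(V_t^{\pi})\ge e^{\alpha b't}$ holds for all sufficiently large $t$, which is the assertion with the constant $b'$ in the role of $b$.

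The argument is essentially elementary, and I do not expect a genuine obstacle; the only point requiring a little care is the last step, where the multiplicative loss coming from the $AEA$ failure probability must be converted into a negligible perturbation of the exponential growth rate. This is precisely why the rate constant appearing in the conclusion may need to be taken strictly smaller than the one supplied by the $AEA$ property, rather than equal to it.
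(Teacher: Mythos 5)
Your argument is correct and is essentially identical to the paper's own proof: both fix $\epsilon=1/2$ in the $AEA$ definition, restrict the expectation to the event $\{V_t^{\pi}\ge e^{bt}\}$ to obtain the lower bound $(1/2)e^{\alpha bt}$, and then absorb the factor $1/2$ by passing to a slightly smaller rate $b'<b$. No further comment is needed.
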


 {\bf Proof.} By definition of $AEA$, there are a constant $b>0$ and a time $t_{1/2}$ such that
$\mathbb{P}(V_t^{\pi}\ge e^{bt})\ge 1/2$ for all time $t\geq t_{1/2}$. It follows that 
$\mathbb{E}U(V_t^{\pi})\ge\mathbb{E}U(e^{bt}){\bf 1}_{\{V_t^{\pi}\ge e^{bt}\}}\ge (1/2)e^{\alpha bt}\ge e^{\alpha b't}$ for any $b'<b$ and for all $t$ large enough, as required. \hfill $_{\blacksquare}$

% More generally, if we do not stick to a convergence rate, we may consider the following larger class of utility functions $U: (0,\infty)\to\mathbb{R}$ satisfying that $U(0):=\lim_{x\to 0+}U(x)$ is finite.  The following
%result is an easy statement about $AEA$ strategies.

%\begin{proposition}\label{p3.2} Suppose that a trading strategy $\pi_t$ is an $AEA$ in the wealth model \eqref{eq3} in the sense of Definition \ref{d1} i), then
%\be\label{moh}
%\mathbb{E}U(V_t^{\pi})\to U(\infty),\,\,\mbox{as}\,\, t\to\infty.
%\e
%meaning that, the expected utility of the corresponding wealth converges to the maximal utility.
%\end{proposition}

 %{\bf Proof.} If \eqref{moh} did not hold, there would be a subsequence $t_k$ such that $V_{t_k}^{\pi}\to\infty$ almost surely, as $k\to\infty$ and
%$\lim_{k\to\infty} \mathbb{E}U(V_{t_k}^{\pi})\to G$ for some $G<U(\infty)$.
%This would contradict Fatou Lemma, since $U(V_{t_k}^{\pi})\geq U(0)>-\infty$ and
%hence $\liminf_{k\to\infty} \mathbb{E}U(V_{t_k}^{\pi})\geq U(\infty)$. Hence the result follows. \hfill $_{\blacksquare}$

Next, suppose that investors choose from the second subclass of power utility functions $U(x):=-x^{\alpha}$, with $\alpha<0$, for all $x\in (0,\infty)$. These functions express larger risk-aversion and are thought to be more realistic. We derive the key result of this section below. We remark that, despite the short proof, the following theorem relies on all the heavy machinery of the paper \cite{KM2} as well as on our preliminary results established in Section 2 and it is, in fact, highly non-trivial.

\begin{theorem}\label{p3.3} Assume that the log-price $X_t$ satisfies $(A_1)-(A_4)$ as well as $(RC_+)$. Let $\pi^+_t$ be the strategy defined in 
Theorem \ref{t2.15}. Then there is $\alpha_0<0$ such that for any risk-aversion coefficient $0>\alpha>\alpha_0$, the expected utility of the corresponding investor's wealth converges to $0$ at an exponential rate. That is, with the power utility $U(x):=-x^{\alpha}$, $x\in (0,\infty)$, we have,
\be\label{e4.4}
|\mathbb{E}U(V_t^{\pi^+})|\le  K e^{-ct},\,\,\mbox{for all large enough}\,\, t,
\e
for some constants $K=K(\alpha),c=c(\alpha)>0$.
\end{theorem}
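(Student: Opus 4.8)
The plan is to exploit the wealth representation \eqref{e32} together with the log-moment generating limit $\Lambda_f$ supplied by Theorem \ref{alap}. Since $U(x)=-x^{\alpha}$ with $\alpha<0$ and $V_t^{\pi^+}>0$, the quantity to control is
\[
|\mathbb{E}U(V_t^{\pi^+})| = \mathbb{E}(V_t^{\pi^+})^{\alpha} = V_0^{\alpha}\,\mathbb{E}\exp\Big(\alpha\sum_{n=1}^t f(\Phi_n)\Big),
\]
where $f$ is the function \eqref{suttog} associated with the strategy $\pi^+$. Thus the task reduces entirely to estimating the Laplace transform of $\sum_{n=1}^t f(\Phi_n)$ at the negative parameter $\theta=\alpha$, and the whole theorem will hinge on a sign.

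First I would observe that $f\in L_{\infty}^{W_0}$ by Lemma \ref{l2.10}, and since $L_{\infty}^{W_0}$ is a vector space, $\alpha f\in L_{\infty}^{W_0}$ as well. Hence Theorem \ref{alap} applies with $g=f$, $\theta=\alpha$ and initial value $(X_{-1},X_0)$, giving in particular that the expectations are finite for large $t$ and
\[
\lim_{t\to\infty}\frac{1}{t}\log\mathbb{E}\exp\Big(\alpha\sum_{n=1}^t f(\Phi_n)\Big) = \Lambda_f(\alpha).
\]
The prefactor $V_0^{\alpha}$ contributes only a term $(\alpha\log V_0)/t\to 0$, so $\frac{1}{t}\log|\mathbb{E}U(V_t^{\pi^+})|\to\Lambda_f(\alpha)$. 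Everything therefore comes down to showing $\Lambda_f(\alpha)<0$ for $\alpha$ slightly below $0$.

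For the sign I would invoke the Taylor expansion of Theorem \ref{alap} with $g_0=0$ and $g=f$: since $\Lambda_f(0)=\Lambda(0)=0$, we have $\Lambda_f(\theta)=\theta\,\nu(f)+\tfrac{1}{2}\theta^2\sigma^2(f)+O(\theta^3)$, whence $\Lambda_f(\theta)/\theta\to\nu(f)$ as $\theta\to 0$. By Lemma \ref{l2.14}, $\nu(f)>0$, so the linear term $\alpha\,\nu(f)<0$ dominates for small $|\alpha|$ and there is $\alpha_0<0$ with $\Lambda_f(\alpha)<0$ for every $\alpha\in(\alpha_0,0)$. Fixing such an $\alpha$ and any $c$ with $0<c<-\Lambda_f(\alpha)$, the displayed limit yields $\mathbb{E}(V_t^{\pi^+})^{\alpha}\le e^{-ct}$ for all large enough $t$; absorbing finitely many early terms into a constant $K=K(\alpha)$ then gives \eqref{e4.4}.

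The main obstacle is conceptual rather than computational: it is the determination of the sign of $\Lambda_f(\alpha)$ via its first-order Taylor coefficient. This is exactly where the analyticity of $\theta\mapsto\Lambda_f(\theta)$ — the deep output of \cite{KM2} encoded in Theorem \ref{alap}, itself resting on the $(DV3+)$ verification carried out in Section 2 — combines with the positivity $\nu(f)>0$ of Lemma \ref{l2.14} to do all the real work. The remaining steps are routine manipulations of the Laplace-transform limit.
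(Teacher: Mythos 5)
Your proof is correct and follows essentially the same route as the paper: reduce $|\mathbb{E}U(V_t^{\pi^+})|$ to the Laplace transform $V_0^{\alpha}\mathbb{E}\exp(\alpha\sum_{n=1}^t f(\Phi_n))$ and deduce $\Lambda_f(\alpha)<0$ for $\alpha$ slightly negative from $\Lambda_f(0)=0$, $\Lambda_f'(0)=\nu(f)>0$ (Lemma \ref{l2.14}) and the smoothness of $\Lambda_f$ guaranteed by Theorem \ref{alap}. The only (harmless) difference is in the last step: the paper extracts the bound from the sharper multiplicative ergodic theorem (Theorem 3.1 of \cite{KM2}), which gives convergence of $\mathbb{E}e^{\alpha\sum f(\Phi_n)}/e^{n\Lambda_f(\alpha)}$ to a constant, whereas you use only the logarithmic limit $\tfrac{1}{t}\log\mathbb{E}[\cdots]\to\Lambda_f(\alpha)$ and take any $c\in(0,-\Lambda_f(\alpha))$ --- which suffices for \eqref{e4.4} and also lets you dispense with the paper's case distinction on whether $f$ is $\nu$-a.s.\ constant.
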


 {\bf Proof.} Recall section 2, in particular, Corollary \ref{c2.7}, Proposition \ref{p2.11} and \eqref{suttog}. When $f$ is constant $\nu$-a.s., the statement is trivial. Otherwise we may assume $\sigma^2(f)>0$
(see the proof of Theorem \ref{t2.15}). In section 2 we obtained  that $\Lambda_f(0)=0$, $\Lambda_f'(0)=\nu(f)>0$. Since by analyticity of $\Lambda_f$, $\Lambda_f'$ is continuous, there exists $\alpha_0<0$ such that $\Lambda_f(\alpha)<0$ for $\alpha\in (\alpha_0,0)$. Theorem 3.1 of \cite{KM2} implies that for some constant $d_{\alpha}$, we have
\be\label{pope}
\frac{-\mathbb{E}e^{\alpha (f(\Phi_1)+\ldots+f(\Phi_n))}}{e^{n\Lambda_f(\alpha)}}=
\frac{ \mathbb{E} U(V^{\pi}_n)/V_0^{\alpha}}
{e^{n\Lambda_f(\alpha)}}
\to d_{\alpha},\ n\to\infty,
\e
showing the statement. \hfill $_{\blacksquare}$

 It seems that, in general, we should not expect more than this result (i.e. we cannot get such a 
theorem for all $\alpha<0$). To illustrate this, we now construct an example where there is 
$AEA$ with $GDPF$ but, for some $\alpha<0$, we have $\mathbb{E}(V_t^{\pi})^{\alpha}\to -\infty$ as $t\to\infty$.

\begin{example}\label{ex3.4} \emph{Consider the $\log$-price $X_t$ governed by the equation $X_{t+1}=X_t+\varepsilon_{t+1}$, $t\in\mathbb{N}$, with $X_0=0$, where $\varepsilon_t$ are i.i.d. random variables in $\mathbb{R}$ with common distribution chosen such that $\mathbb{E}e^{-\varepsilon_1}>1$ and $\mathbb{E}\varepsilon_1 >0$. For example $\varepsilon_1\sim\mathcal{N}(1/4,1)$
will do. We identify the drift and volatility as $\mu\equiv 0$ and $\sigma\equiv 1$.}

 \emph{Choose the trading strategy $\pi_t\equiv 1$ for all $t$ and let $V_0=1$. Then we have $V_t:=\exp(\varepsilon_1+\cdots +\varepsilon_t)$ for all $t\ge 1$. Since $1/5<1/4=\mathbb{E}\varepsilon_1$, by Cram\'er's theorem (see e.g. \cite{DZ}), there are a constant $c>0$ and $t_0>0$ such that for all $t\ge t_0$, we have $\mathbb{P}(V_t\ge e^{t/5})\ge 1-e^{-ct}$. Hence there is $AEA$ with $GDPF$.}

 \emph{However, for $\alpha=-1$, we have, by independence,
$$
\mathbb{E}U(V_t)=\mathbb{E}(-V_t^{-1})=-\mathbb{E}\exp\{-(\varepsilon_1+\cdots +\varepsilon_t)\}=
-(\mathbb{E}e^{-\varepsilon_1})^t\to -\infty
$$ as $t\to\infty$.} \hfill $_{\blacksquare}$
\end{example}

 Finally we investigate what happens if a risk-averse agent produces expected utility tending to $0=U(\infty)$ exponentially fast as $t\to\infty$. It turns out that his/her strategy produces $AEA$ with $GDPF$. Indeed, following the footsteps of
Proposition 2.2 in \cite{FSr} we get the following result.

\begin{theorem}\label{t3.5} Consider the power utility $U(x)=-x^{\alpha}$ for some $\alpha<0$. 
Let $\pi_t$ be such that $|\mathbb{E}U(V_t^{\pi})|\le Ke^{-ct}$ for all large enough $t$, for some constants $c,K>0$. Then $\pi_t$ provides an $AEA$ with $GDPF$.
\end{theorem}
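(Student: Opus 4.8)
The plan is to exploit the fact that for $\alpha<0$ the map $x\mapsto x^\alpha$ is strictly decreasing on $(0,\infty)$, so that a lower bound on the wealth $V_t^\pi$ is equivalent to an upper bound on $(V_t^\pi)^\alpha$, and the hypothesis controls precisely the expectation of the latter. First I would rewrite the assumption: since $U(x)=-x^\alpha<0$ for $\alpha<0$, we have $|\mathbb{E}U(V_t^\pi)|=\mathbb{E}(V_t^\pi)^\alpha$, so the hypothesis reads
\[
\mathbb{E}\,(V_t^\pi)^\alpha\le Ke^{-ct}
\]
for all large enough $t$.

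Next, I would fix a candidate profit rate $b>0$ (to be calibrated at the end). Because $\alpha<0$, the event $\{V_t^\pi<e^{bt}\}$ coincides with $\{(V_t^\pi)^\alpha>e^{\alpha bt}\}$. Applying Markov's inequality to the nonnegative random variable $(V_t^\pi)^\alpha$ and invoking the hypothesis,
\[
\mathbb{P}(V_t^\pi<e^{bt})=\mathbb{P}\big((V_t^\pi)^\alpha>e^{\alpha bt}\big)\le \frac{\mathbb{E}(V_t^\pi)^\alpha}{e^{\alpha bt}}\le K e^{-(c+\alpha b)t}.
\]

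The final step is the calibration. Since $\alpha<0$ I write $c+\alpha b=c-|\alpha|b$, so choosing any $b\in(0,c/|\alpha|)$ makes the exponent $c':=c-|\alpha|b$ strictly positive, giving $\mathbb{P}(V_t^\pi<e^{bt})\le Ke^{-c't}$. For $t$ large enough the prefactor $K$ is absorbed (e.g. $Ke^{-c't}\le e^{-(c'/2)t}$ as soon as $Ke^{-(c'/2)t}\le 1$), so that
\[
\mathbb{P}(V_t^\pi\ge e^{bt})\ge 1-e^{-(c'/2)t}
\]
for all large enough $t$, which is exactly $AEA$ with $GDPF$ in the sense of Definition \ref{d1}$(ii)$.

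There is no genuine analytic obstacle here: the argument is a one-line Chebyshev/Markov estimate. The only point demanding care is the calibration trade-off, namely that the profit rate $b$ one can guarantee is capped by $c/|\alpha|$, so that a faster decay $c$ of the expected utility, or a milder risk aversion (smaller $|\alpha|$), permits a larger guaranteed growth rate $b$. I would also explicitly verify that the inequality runs in the intended direction — that rapid decay of the (negative) expected utility forces the wealth to be \emph{large} with overwhelming probability, rather than the reverse — which is precisely what the monotonicity reversal under the negative power $\alpha$ delivers.
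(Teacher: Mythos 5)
Your proof is correct and follows essentially the same route as the paper's: both reverse the event $\{V_t^{\pi}<e^{bt}\}$ into a tail event for $(V_t^{\pi})^{\alpha}$, apply Markov's inequality, and calibrate $b$ so that $c+\alpha b>0$. The only cosmetic difference is that you absorb the constant $K$ explicitly for large $t$, whereas the paper normalizes $K=1$ at the outset.
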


 {\bf Proof.} We may and will assume $K=1$. We need to find constants $b>0$, $c'>0$ such that 
 $\mathbb{P}(V_t^{\pi}\ge e^{bt})\ge 1-e^{-c't}$ for all large enough $t$. Choose $b>0$ such that $c+\alpha b>0$, then we have
\be\begin{array}{lll}
\mathbb{P}(V_t^{\pi}<e^{bt}) &=& \mathbb{P}\big(|U(V_t^{\pi})|> |U(e^{bt})|\big)\\
    &\le& \frac{\mathbb{E}|U(V_t^{\pi})|}{|U(e^{bt})|}\,\,\mbox{by Markov's inequality}.
\end{array}
\e

 But $\mathbb{E}|U(V_t^{\pi})|=|\mathbb{E}U(V_t^{\pi})|\le e^{-ct}$ and $|U(e^{bt})|=e^{\alpha bt}$ imply $\mathbb{P}(V_t^{\pi}<e^{bt})\le e^{-(c+\alpha b)t}$ for all $t$. 
 %This shows 
%$\mathbb{P}(V_t^{\pi}\ge e^{bt})\ge 1-e^{-(c+\alpha b)t}$ for large $t$. 
Hence the result follows taking $c':=c+\alpha b$. \hfill $_{\blacksquare}$

 To conclude, if an economic agent with $HARA$ utility risk-aversion coefficient $\alpha<0$ achieves an expected utility that converges exponentially fast to $0$, then his/her strategy provides $AEA$ with $GDPF$, too. Conversely, under the stringent conditions of Section 2, one is able to construct strategies producing $AEA$ with $GDPF$ which also provide expected utilities tending to $0$ 
 exponentially fast {for $\alpha$ not
too negative} (i.e. for not too risk-averse investors).

\end{document}